\newcommand{\labell}[1] {\label{#1}}
\numberwithin{equation}{section}
\newtheorem {Theorem}{Theorem}
\numberwithin{Theorem}{section}
\newtheorem {Lemma}[Theorem]    {Lemma}
\newtheorem {Proposition}[Theorem]{Proposition}
\theoremstyle{definition}
\theoremstyle{remark}
\newtheorem{Remark}[Theorem]{Remark}
\newtheorem{Example}[Theorem]{Example}
\chardef\csname pre amssym.def at\endcsname=\the\catcode`\@
\def\undefine#1{\let#1\undefined}
\def\newsymbol#1#2#3#4#5{\let\next@\relax
 \ifnum#2=\@ne\let\next@\msafam@\else
 \ifnum#2=\tw@\let\next@\msbfam@\fi\fi
 \mathchardef#1="#3\next@#4#5}
\def\mathhexbox@#1#2#3{\relax
 \ifmmode\mathpalette{}{\m@th\mathchar"#1#2#3}%
 \else\leavevmode\hbox{$\m@th\mathchar"#1#2#3$}\fi}
\def\hexnumber@#1{\ifcase#1 0\or 1\or 2\or 3\or 4\or 5\or 6\or 7\or 8\or
 9\or A\or B\or C\or D\or E\or F\fi}
\font\teneufm=eufm10
\font\seveneufm=eufm7
\font\fiveeufm=eufm5
\def    \eps    {\epsilon}
\newcommand{\CM}{{\mathcal M}}
\newcommand{\CS}{{\mathcal S}}
\newcommand{\const}{{\mathit const}}
\def    \C      {{\mathbb C}}
\def    \R      {{\mathbb R}}
\def    \Z      {{\mathbb Z}}
\def    \T      {{\mathbb T}}
\def    \12    {{\frac{1}{2}}}
\def    \p      {\partial}
\def    \codim  {\operatorname{codim}}
\def    \HF     {\operatorname{HF}}
\def    \HC     {\operatorname{HC}}
\def    \CF     {\operatorname{CF}}
\def    \Sp     {\operatorname{Sp}}
\def    \tSp     {\operatorname{\widetilde{Sp}}}
\def    \U     {\operatorname{U}}
\def    \sign     {\operatorname{sign}}
\def    \tPhi     {\tilde{\Phi}}
\def    \tF     {\tilde{F}}
\def    \tf     {\tilde{f}}
\def    \tPsi     {\tilde{\Psi}}
\def    \tDelta     {\tilde{\Delta}}
\def    \tH     {\tilde{H}}
\def    \tgamma     {\tilde{\gamma}}
\def    \trho     {\tilde{\rho}}
\def \inv       {\scriptscriptstyle{\mathrm{inv}}}
\def    \MUCZ  {\operatorname{\mu_{\scriptscriptstyle{CZ}}}}
\def    \ssminus        {\smallsetminus}
\begin{document}

%%%%%%%%%%%%%%%%%%%%%%%%%%%%%%
%   TEXT FORMATTING

\setlength{\smallskipamount}{6pt}
\setlength{\medskipamount}{10pt}
\setlength{\bigskipamount}{16pt}

%%%%%%%%%%%%%%%%%%%%%%%%%%

%%%%%%%%%%%%%%%%%%%%%%%%%%

%%%%%%%%%%%           BEGINNING OF  TEXT

%%%%%%%%%%%%%%%%%%%%%%%%%%

\title[Weinstein--Moser Theorem]
{Periodic Orbits of Twisted Geodesic Flows and The Weinstein--Moser Theorem}

\author[Viktor Ginzburg]{Viktor L. Ginzburg}
\author[Ba\c sak G\"urel]{Ba\c sak Z. G\"urel}

\address{VG: Department of Mathematics, UC Santa Cruz,
Santa Cruz, CA 95064, USA}
\email{ginzburg@math.ucsc.edu}

\address{BG: Centre de recherches math\'ematiques,
Universit\'e de Montr\'eal,
P.O. Box 6128, Centre--ville Station,
Montr\'eal, QC, H3C 3J7, CANADA}
\email{gurel@crm.umontreal.ca}

\subjclass[2000]{53D40, 37J10, 37J45}
\date{\today} 
\thanks{The work is partially supported by the NSF and by the faculty
research funds of the University of California, Santa Cruz.}

\bigskip

\begin{abstract}
In this paper, we establish the existence of periodic orbits of a
twisted geodesic flow on all low energy levels and in all dimensions
whenever the magnetic field form is symplectic and spherically
rational. This is a consequence of a more general theorem concerning
periodic orbits of autonomous Hamiltonian flows near Morse--Bott
non-degenerate, symplectic extrema.  Namely, we show that all energy
levels near such extrema carry periodic orbits, provided that the
ambient manifold meets certain topological requirements. This result
is a partial generalization of the Weinstein--Moser theorem. The proof
of the generalized Weinstein--Moser theorem is a combination of a
Sturm--theoretic argument and a Floer homology calculation.
\end{abstract}

\maketitle

\section{Introduction and main results}
\labell{sec:intro}

In the early 1980s, V.I. Arnold proved, as a consequence of the
Conley--Zehnder theorem, \cite{CZ1}, the existence of periodic orbits
of a twisted geodesic flow on $\T^2$ with symplectic magnetic field
for all energy levels when the metric is flat and low energy levels
for an arbitrary metric, \cite{Ar2}.  This result initiated an
extensive study of the existence problem for periodic orbits of
general twisted geodesic flows via Hamiltonian dynamical systems
methods and in the context of symplectic topology, mainly focusing on
low energy levels. (A brief and admittedly incomplete survey of some
related work is provided in Section \ref{sec:other-results}.)

In the present paper, we establish the existence of periodic orbits of
a twisted geodesic flow on all low energy levels and in all dimensions
whenever the magnetic field form is symplectic and spherically
rational. An essential point is that, in contrast with other results
of this type, we do not require any compatibility conditions on the
Hamiltonian and the magnetic field. In fact, we prove a more general
theorem concerning periodic orbits of autonomous Hamiltonian flows
near Morse--Bott non-degenerate, symplectic extrema.  Namely, we show
that all energy levels near such extrema carry periodic orbits,
provided that the ambient manifold meets certain topological
requirements.  This result is a (partial) generalization of the
Weinstein--Moser theorem, \cite{Mo:orbits,We:orbits}, asserting that a
certain number of distinct periodic orbits exist on every energy level
near a non-degenerate extremum. The proof of the generalized
Weinstein--Moser theorem is a combination of a Sturm--theoretic
argument utilizing convexity of the Hamiltonian in the direction
normal to the critical submanifold and of a Floer--homological
calculation that guarantees ``dense existence'' of periodic orbits
with certain index. The existence of periodic orbits for a twisted
geodesic flow with symplectic magnetic field is then an immediate
consequence of the generalized Weinstein--Moser theorem.

\subsection{The generalized Weinstein--Moser theorem}
\labell{sec:WM}

Throughout the paper, $M$ will stand for a closed symplectic
submanifold of a symplectic manifold $(P,\omega)$. We denote by
$[\omega]$ the cohomology class of $\omega$ and by $c_1(TP)$ the first
Chern class of $P$ equipped with an almost complex structure
compatible with $\omega$. The integrals of these classes over a
2-cycle $u$ will be denoted by $\left<\omega,u\right>$ and,
respectively, $\left<c_1(TP),u\right>$. Recall also that $P$ is said
to be \emph{spherically rational} if the integrals
$\left<\omega,u\right>$ over all $u\in\pi_2(P)$ are commensurate,
i.e., $\lambda_0=\inf\{|\left<\omega,u\right>|\mid u\in\pi_2(P)\}>0$
or, equivalently, $\left<\omega,\pi_2(P)\right>$ is a discrete
subgroup of $\R$.

The key result of the paper is

\begin{Theorem}[Generalized Weinstein--Moser theorem]
\labell{thm:main}

Let $K\colon P\to \R$ be a smooth function on a symplectic manifold
$(P,\omega)$, which attains its minimum $K=0$ along a closed
symplectic submanifold $M\subset P$. Assume in addition that the
critical set $M$ is Morse--Bott non-degenerate and one of the
following cohomological conditions is satisfied:
\begin{itemize}
\item[(i)] $M$ is spherically rational and $c_1(TP)=0$, or
\item[(ii)] $c_1(TP)=\lambda[\omega]$ for some $\lambda\neq 0$.
\end{itemize}
Then for every sufficiently small $r^2>0$ the level $K=r^2$ carries a
contractible in $P$ periodic orbit of the Hamiltonian flow of $K$ with
period bounded from above by a constant independent of $r$.
\end{Theorem}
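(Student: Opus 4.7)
The plan is to reduce the theorem to a semi-local problem on a neighborhood of $M$ and then combine a linear Sturm--theoretic estimate with a filtered Floer homology calculation to force the existence of a non-constant one-periodic orbit on a prescribed level.

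First, I would invoke the symplectic neighborhood theorem to identify a neighborhood of $M$ in $P$ with a neighborhood of the zero section in the symplectic normal bundle $\pi\colon E\to M$. Choosing a compatible Hermitian structure on $E$, let $\rho\colon E\to\R$ be the fiberwise squared-norm, and write $K=Q+O(\rho^{3/2})$ near $M$, where $Q$ is a smooth, fiberwise positive-definite quadratic form. The Hamiltonian flow of $Q$ restricts on each fiber $E_x$ to a linear symplectic flow generated by a positive-definite quadratic Hamiltonian, and the flow of $K$ in the region $\{\rho\le r^2\}$ is a small perturbation of it.

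Next, I would use Sturm-type comparison (as in Ekeland's theory of convex Hamiltonian systems) for the linearized flow: for a path in $\tSp$ generated by a positive-definite quadratic Hamiltonian, the Conley--Zehnder index is a strictly monotone function of time, so that a uniform upper bound on the index of a one-periodic orbit translates to a uniform upper bound on its period. Applied to $1$-periodic orbits of an auxiliary Hamiltonian of the form $H=\varphi(K)$, this gives a universal bound on the period of the corresponding orbit of $K$, valid for all small $r$, provided the orbit has $\MUCZ$ within a controlled window.

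The Floer homology step constructs, for each small $r^2>0$, a compactly supported Hamiltonian $H_{r}=\varphi_r(K)$ on a neighborhood of $M$ where $\varphi_r\colon\R_{\ge 0}\to\R$ is non-decreasing, vanishes on a small interval $[0,\epsilon]$ and is linear with a large slope on a short interval centered at $r^2$, with $\varphi_r\equiv\const$ for $K$ large. One extends $H_r$ by this constant to $P$ and computes the filtered Floer homology $\HF^{(a,b)}(H_r)$ for a carefully chosen action window. The cohomological hypotheses (i) or (ii) ensure spherical rationality of $P$, so the filtered complex is well defined and index ambiguities from $c_1$ vanish modulo a fixed lattice; as a consequence the part of the complex coming from the critical manifold $M$ contributes a shifted copy of $H_*(M)$ in a computable degree, disjoint in action from any bona fide non-constant $1$-periodic orbit on a chosen level. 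Comparing $H_r$ with a small Morse perturbation supported near $M$ shows that this class is non-zero, while a comparison with a much flatter Hamiltonian kills the class unless the continuation chain goes through a non-constant $1$-periodic orbit of $H_r$. This orbit is contained in the short slope region, hence corresponds to a periodic orbit of $K$ on a level $\{K=r'^2\}$ with $r'\approx r$, and by adjusting the slope one sweeps out every small level.

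The main obstacle I expect is the Floer--theoretic calculation: one must set up a filtered complex in a semi-local model, control bubbling and non-compactness using the rationality and $c_1$ hypotheses, and compute the contribution of the Morse--Bott submanifold $M$ precisely enough so that the resulting Floer class cannot be represented purely by constants on $M$. The Sturm bound by contrast is a clean piece of symplectic linear algebra, and the reduction to a normal-bundle model is standard; the delicate point is arranging the Floer calculation so that the output periodic orbit lies on the desired level rather than merely on some level near $M$.
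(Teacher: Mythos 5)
Your overall strategy is indeed the paper's: a Sturm-type index-growth estimate for orbits of $K$ near the symplectic extremum, combined with a filtered Floer homology computation for a compactly supported reparametrization $\varphi(K)$ squeezed between two model Hamiltonians of the fiberwise radial coordinate. However, the step you defer to ``adjusting the slope one sweeps out every small level'' is a genuine gap, and it is precisely the point of the theorem: varying the slope or the location of the slope region only produces orbits on a dense set of small levels, which is why arguments of this type ordinarily give only dense or almost existence. To reach the prescribed level $K=r^2$ you must fix $r$ once and for all, run the Floer argument for Hamiltonians supported in shells $\{r^2-\eps\le K\le r^2+\eps\}$, note that the detected one-periodic orbit of $H$ is necessarily non-constant (its action exceeds $\max H$, while constants have action $0$ or $\max H$) and hence is a $T$-periodic orbit of $K$ on some level inside the shell whose Salamon--Zehnder invariant lies in a window of width $\dim P$ around $n_0$, conclude from the Sturm estimate that $T\le T_0$ with $T_0$ independent of $\eps$, and finally let $\eps\to 0$ and apply Arzela--Ascoli to extract a periodic orbit exactly on $K=r^2$ of period at most $T_0$. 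You state the period bound but never assemble this limiting argument; without it the conclusion ``every small level'' does not follow from your construction.

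A second gap is that your argument does not use hypotheses (i) and (ii) where they are actually needed, namely in converting the index bound into a period bound. When $c_1(TP)\neq 0$ the index (or Salamon--Zehnder invariant) of the capped orbit produced by the Floer calculation depends on the capping, and the growth estimate acquires a correction of the form $-2\int_v\sigma$ with $[\sigma]=c_1(TP)$; an index bound therefore bounds $T$ only if one also bounds $\int_v\sigma$ for the specific capping $v$ carried by the Floer cycle. Under (ii) this is achieved by writing $\sigma=\lambda\omega+d\alpha$ and using both the a priori action window (roughly $C(r)<A_H(\gamma,v)<2C(r)$ with $0\le H\le C(r)$, which bounds $\bigl|\int_v\omega\bigr|$) and the length estimate $l(\gamma)\le \const\cdot r\cdot T$ to control $\int_\gamma\alpha$; your proposal invokes (i)/(ii) only to make the filtered complex well defined, which is not where the hypotheses do their real work. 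Relatedly, the Sturm step is not quite the ``clean piece of symplectic linear algebra'' you describe: the Hessian of $K$ is positive only in the normal directions and vanishes along $M$, and the orbit wanders over $M$, so the comparison must be carried out chart by chart against a Hermitian-parallel trivialization, with the quasimorphism property of the Salamon--Zehnder invariant controlling the accumulated errors (of order $r\cdot T$, since the number of charts is proportional to the length of the orbit) and the Gauss--Bonnet theorem converting the change of trivialization into the $c_1$ correction just mentioned.
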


When $M$ is a point, Theorem \ref{thm:main} turns into the
Weinstein--Moser theorem (see \cite{We:orbits} and \cite{Mo:orbits})
on the existence of periodic orbits near a non-degenerate extremum,
albeit without the lower bound $1+\dim P/2$ on the number of periodic
orbits.

\begin{Remark}
The assertion of the theorem is local and concerns only a neighborhood
of $M$ in $P$. Hence, in (i) and (ii), we can replace $c_1(TP)$ by
$c_1(TP|_M)=c_1(TM)+c_1(TM^\perp)$ and $[\omega]$ by
$[\omega|_M]$. Also note that in (ii) we do not require $\lambda$ to
be positive, i.e., $M$ need not be monotone. (However, this condition
does imply that $M$ is spherically rational.) We also emphasize that
we do need conditions (i) and (ii) in their entirety -- the weaker
requirements $c_1(TP)|_{\pi_2(P)}=0$ or
$c_1(TP)|_{\pi_2(P)}=\lambda[\omega]|_{\pi_2(P)}$, common in
symplectic topology, are not sufficient for the proof.

Although conditions (i) and (ii) enter our argument in an essential
way, their role is probably technical (see Section \ref{sec:role}),
and one may expect the assertion of the theorem to hold without any
cohomological restrictions on $P$. For instance, this is the case
whenever $\codim M=2$; see \cite{Gi96}.  Furthermore, when $\codim
M\geq 2$ the theorem holds without (i) and (ii), provided that the
normal direction Hessian $d^2_M K$ and $\omega$ meet a certain
geometrical compatibility requirement; \cite{GK1,GK2,Ke1}. On the
other hand, the condition that the extremum $M$ is Morse--Bott
non-degenerate is essential; see \cite{GG}.
\end{Remark}

\subsection{Periodic orbits of twisted geodesic flows}
\labell{sec:magn}

Let $M$ be a closed Riemannian manifold and let $\sigma$ be a closed
2-form on $M$. Equip $T^*M$ with the twisted symplectic structure
$\omega=\omega_0+\pi^*\sigma$, where $\omega_0$ is the standard
symplectic form on $T^*M$ and $\pi\colon T^*M\to M$ is the natural
projection.  Denote by $K$ the standard kinetic energy Hamiltonian on
$T^*M$ corresponding to a Riemannian metric on $M$.  The Hamiltonian
flow of $K$ on $T^*M$ describes the motion of a charge on $M$ in the
\emph{magnetic field} $\sigma$ and is sometimes referred to as a
magnetic or \emph{twisted geodesic flow}; see, e.g., \cite{Gi:newton}
and references therein for more details. Clearly, $c_1(T(T^*M))=0$,
for $T^*M$ admits a Lagrangian distribution (e.g., formed by spaces
tangent to the fibers of $\pi$), and $M$ is a Morse--Bott
non-degenerate minimum of $K$. Furthermore, $M$ is a symplectic
submanifold of $T^*M$ when the form $\sigma$ symplectic. Hence, as an
immediate application of case (i) of Theorem \ref{thm:main}, we obtain

\begin{Theorem}
\labell{thm:magn}

Assume that $\sigma$ is symplectic and spherically rational. Then for
every sufficiently small $r^2>0$ the level $K=r^2$ carries a
contractible in $T^*M$ periodic orbit of the twisted geodesic flow
with period bounded from above by a constant independent of~$r$.
\end{Theorem}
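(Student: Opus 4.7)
The plan is to deduce Theorem \ref{thm:magn} as a direct specialization of case (i) of Theorem \ref{thm:main}. Concretely, I would take $(P,\omega) = (T^*M,\,\omega_0+\pi^*\sigma)$ as the ambient symplectic manifold, let the Hamiltonian be the kinetic energy $K$, and identify the critical submanifold of the theorem with the zero section of $T^*M$, which I also denote by $M$. Once the four hypotheses of Theorem \ref{thm:main}(i) are verified at $M$, the conclusion -- a contractible periodic orbit on every sufficiently small level $\{K=r^2\}$ with period bounded independently of $r$ -- is immediate.

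The verification proceeds point by point. First, $K\geq 0$ and $K$ vanishes exactly on the zero section, so $M$ is the minimum locus. Second, in local Darboux-type coordinates along the fibers, the Hessian of $K$ in directions normal to $M$ is the Riemannian metric, which is positive definite; hence $M$ is Morse--Bott non-degenerate. Third, the restriction $\omega|_M$ equals $\sigma$ (the $\omega_0$-term vanishes on the zero section), so symplecticity of $\sigma$ makes $M$ a symplectic submanifold of $T^*M$. Fourth, $T^*M$ carries a Lagrangian distribution, namely the vertical tangent bundle $\ker d\pi$, which trivializes $T(T^*M)$ as a complex bundle up to stabilization; this forces $c_1(T(T^*M))=0$. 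Finally, since $\omega|_M=\sigma$ under the zero-section embedding, the spherical periods of $\omega$ on $M$ coincide with those of $\sigma$, so spherical rationality of $\sigma$ yields spherical rationality of $M$ as a submanifold of $(P,\omega)$.

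With all hypotheses of Theorem \ref{thm:main}(i) in place, the conclusion transfers verbatim: every sufficiently small level $\{K=r^2\}$ carries a periodic orbit of the Hamiltonian (i.e.\ twisted geodesic) flow that is contractible in $T^*M$, with a uniform upper bound on the period. There is essentially no obstacle at this stage; the entire analytic content -- the Sturm-theoretic normal-form argument together with the Floer-homological dense-existence statement -- is packaged inside Theorem \ref{thm:main}, and the work here reduces to confirming that the twisted cotangent setting falls squarely within its hypotheses.
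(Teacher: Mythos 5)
Your proposal is correct and follows essentially the same route as the paper: Theorem \ref{thm:magn} is obtained there precisely as an immediate application of case (i) of Theorem \ref{thm:main}, by observing that the zero section is a Morse--Bott non-degenerate minimum of $K$, that it is symplectic (and spherically rational) when $\sigma$ is, and that $c_1(T(T^*M))=0$ because of the vertical Lagrangian distribution. The only cosmetic difference is your phrase ``up to stabilization''; the cleaner statement is that a Lagrangian subbundle makes $T(T^*M)$ isomorphic to a complexified real bundle, which kills $c_1$ (in real cohomology, which is what the proof of Theorem \ref{thm:main} uses).
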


\begin{Remark}
The proof of Theorem \ref{thm:main} is particularly transparent when
$P$ is \emph{geometrically bounded} and \emph{symplectically
aspherical} (i.e., $\omega|_{\pi_2(P)}=0=c_1(TP)|_{\pi_2(P)}$). This
particular case is treated in Section \ref{sec:proof-sa}, preceding
the proof of the general case.  The twisted cotangent bundle
$(T^*M,\omega)$ is geometrically bounded; see \cite{AL,CGK,Lu}.
Furthermore, $(T^*M,\omega)$ is symplectically aspherical if and only
if $(M,\sigma)$ is weakly exact (i.e., $\sigma|_{\pi_2(M)}=0$).
\end{Remark}

Note also that, as the example of the horocycle flow shows, a twisted
geodesic flow with symplectic magnetic field need not have periodic
orbits on all energy levels; see, e.g., \cite{CMP,Gi:newton} for a
detailed discussion of this example and of the resulting transition in
the dynamics from low to high energy levels. Similar examples also
exist for twisted geodesic flows in dimensions greater than two,
\cite[Section 4]{Gi:bayarea}.

\subsection{Related results}
\labell{sec:other-results}

To the best of the authors' knowledge, the existence problem for
periodic orbits of a charge in a magnetic field was first addressed by
V.I. Arnold in the early 1980s; \cite{Ar2,Ko}. Namely, V.I. Arnold
established the existence of at least three periodic orbits of a
twisted geodesic flow on $M=\T^2$ with symplectic magnetic field for
all energy levels when the metric is flat and low energy levels for an
arbitrary metric. (It is still unknown if the second of these results
can be extended to all energy levels.) Since then the question has
been extensively investigated. It was interpreted (for a symplectic
magnetic field) as a particular case of the generalized
Weinstein--Moser theorem in \cite{Ke1}. Referring the reader to
\cite{Gi:newton,Gi:arnold,Gi:alan} for a detailed review and further
references, we mention here only some of the results most relevant to
Theorems \ref{thm:main} and \ref{thm:magn}.

%Almost existence
The problems of \emph{almost existence} and \emph{dense existence} of
periodic orbits concern the existence of periodic orbits on almost all
energy levels and, respectively, on a dense set of levels.  In the
setting of the generalized Weinstein--Moser theorem or of twisted
geodesic flows, these problems are studied for low energy levels in,
e.g., \cite{CGK,Co,CIPP,FS,GG,Gu,Ke,Ma,Lu,Lu2,Schl}, following the
original work of Hofer and Zehnder and of Struwe,
\cite{FHW,HZ:V,HZ:cap,HZ,St}.  In particular, almost existence for
periodic orbits near a symplectic extremum is established in
\cite{Lu2} under no restrictions on the ambient manifold $P$. When $P$
is geometrically bounded and (stably) strongly semi-positive, almost
existence is proved for almost all low energy levels in \cite{Gu}
under the assumption that $\omega|_M$ does not vanish at any point,
and in \cite{Schl} when $M$ has middle-dimension and $\omega|_M\neq
0$. These results do not require the extremum $M$ to be Morse--Bott
non-degenerate. Very strong almost existence results (not restricted
to low energy levels) for twisted geodesic flows with exact magnetic
fields and also for more general Lagrangian systems are obtained in
\cite{Co,CIPP}.  The dense or almost existence results established in
\cite{CGK,GG,Ke} follow from Theorem \ref{thm:main}.  However, the
proof of Theorem \ref{thm:main} relies on the almost existence theorem
from \cite{GG} or, more precisely, on the underlying Floer homological
calculation.

%Existence
As is pointed out in Section \ref{sec:WM}, in the setting of the
generalized Weinstein--Moser theorem without requirements (i) and
(ii), every low energy level carries a periodic orbit whenever $\codim
M=2$ or provided that the normal direction Hessian $d^2_M K$ and
$\omega$ meet certain geometrical compatibility conditions, which are
automatically satisfied when $\codim M=2$ or $M$ is a point; see
\cite{Gi:FA,Gi96,GK1,GK2,Ke1,Mo:orbits,We:orbits} and references
therein. Moreover, under these conditions, non-trivial lower bounds on
the number of distinct periodic orbits have also been obtained.  The
question of existence of periodic orbits of twisted geodesic flows on
(low) energy levels for magnetic fields on surfaces is studied in,
e.g., \cite{No,NT,Ta1,Ta2} in the context of Morse--Novikov theory;
see also \cite{Co,CIPP,CMP,Gi:arnold} for further references. (In
general, this approach requires no non-degeneracy condition on the
magnetic field.)  For twisted geodesic flows on surfaces with exact
magnetic fields, existence of periodic orbits on all energy levels is
proved in \cite{CMP}.

\subsection{Infinitely many periodic orbits} The multiplicity results
from \cite{Ar2,Gi:FA,Gi96,GK1,GK2,Ke1} rely (implicitly in some
instances) on the count of ``short'' periodic orbits of the
Hamiltonian flow on $K=r^2$. The resulting lower bounds on the number
of periodic orbits can be viewed as a ``crossing-over'' between the
Weinstein--Moser type lower bounds in the normal direction to $M$ and
the Arnold conjecture type lower bounds along $M$. This approach
encounters serious technical difficulties unless $\omega$ and $d^2_MK$
meet some geometrical compatibility requirements, for otherwise even
identifying the class of short orbits is problematic.

However, looking at the question from the perspective of the Conley
conjecture (see \cite{FrHa,Gi:conley,Hi,SZ}) rather than of the Arnold
conjecture, one can expect every low level of $K$ to carry infinitely
many periodic orbits (not necessarily short), provided that $\dim
M\geq 2$ and $M$ is symplectically aspherical. An indication that this
may indeed be the case is given by

\begin{Proposition}
\labell{prop:inf-many}

Assume that $M$ is symplectically aspherical and not a point, and
$\codim M=2$ and the normal bundle to $M$ in $P$ is trivial. Then
every level $K=r^2$, where $r>0$ is sufficiently small, carries
infinitely many distinct, contractible in $P$ periodic orbits of $K$.
\end{Proposition}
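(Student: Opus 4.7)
The plan is to reduce the proposition to finding periodic orbits of a Hamiltonian diffeomorphism of $M$, obtained as the Poincar\'e first-return map on a cross-section of the $K$-flow, and then to invoke Ginzburg's Conley conjecture for symplectically aspherical manifolds \cite{Gi:conley}. First, by the Weinstein symplectic neighborhood theorem for symplectic submanifolds, a tubular neighborhood $U$ of $M$ in $P$ is symplectomorphic to a neighborhood of the zero section in the symplectic normal bundle. The hypothesis that this rank-$2$ normal bundle is trivial, together with the fact that rank-$2$ symplectic bundles are classified by their underlying oriented real vector bundles (as $\Sp(2,\R)$ deformation retracts onto $SO(2)$), yields the identification $U \cong (M\times D^2_\eps,\, \omega|_M + dx\wedge dy)$. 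Morse--Bott non-degeneracy of $K$ along $M$, combined with the $\Sp(2,\R)$-conjugacy of any positive-definite quadratic form on $(\R^2,\omega_0)$ to a multiple of $x^2+y^2$, allows a fiberwise linear symplectic change of variables bringing $K$ to the normal form
\[
K(m,z) \;=\; \alpha(m)\,J \;+\; O(|z|^3), \qquad J := \tfrac{1}{2}(x^2+y^2),
\]
for some smooth $\alpha\colon M\to(0,\infty)$.

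Next, I would fix the cross-section $\Sigma := M\times\{(x,0) : x>0\} \subset U$, which is transverse to $X_K$ on $U\setminus M$. For $r>0$ sufficiently small, $\Sigma_r := \Sigma \cap \{K=r^2\}$ is the graph of $x = r\sqrt{2/\alpha(m)} + O(r^2)$ over $M$, hence diffeomorphic to $M$, and inherits a symplectic form from the reduction of $\omega|_{\{K=r^2\}}$ which coincides with $\omega|_M$ under this identification (since $dy = 0$ on $\Sigma$). The first-return map $\Phi_r\colon\Sigma_r\to\Sigma_r$ of the $K$-flow is a symplectomorphism $C^\infty$-close to $\id_M$ as $r\to 0$; in the model $K_0 = \alpha(m)\,J$ a direct computation gives $\Phi_r(m_0) = m_0 + 2\pi r^2 X_{-1/\alpha}(m_0) + O(r^4)$, i.e., $\Phi_r$ is, to leading order, the time-$2\pi r^2$ map of the autonomous Hamiltonian flow of $-\alpha^{-1}$ on $M$. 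In particular $\Phi_r$ has vanishing flux and is therefore a Hamiltonian diffeomorphism of $(M,\omega|_M)$.

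Since $M$ is symplectically aspherical with $\dim M\geq 2$ ($M$ being symplectic and not a point), Ginzburg's theorem \cite{Gi:conley} produces infinitely many distinct contractible periodic orbits of $\Phi_r$, with minimal periods unbounded. Each such orbit of $\Phi_r$ lifts to a periodic orbit of $X_K$ on the level $\{K=r^2\}$, returning to $\Sigma_r$ the corresponding number of times before closing up; distinct orbits of $\Phi_r$ give geometrically distinct orbits of $K$. Contractibility in $P$ follows, since the projection of such an orbit to $M$ is contractible in $M$ (by the Conley-conjecture contractibility) and the projection to $D^2$ is automatically null-homotopic, so the orbit is null-homotopic in $U \subset P$. \textbf{Main obstacle.} The most delicate point I anticipate is a rigorous verification that $\Phi_r$ is Hamiltonian rather than merely symplectic; this amounts to showing the flux of a natural isotopy from $\id$ to $\Phi_r$ vanishes. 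The cleanest route is to express $\Phi_r$ as the time-$1$ map of a time-dependent Hamiltonian on $\Sigma_r$ induced by the autonomous flow of $K$, which, since $K$ is itself autonomous and Hamiltonian, has zero flux. Once this identification is in place, the proposition reduces entirely to the Conley conjecture on $M$ applied to $\Phi_r$.
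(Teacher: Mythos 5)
Your argument follows essentially the same route as the paper's proof: reduce to the Poincar\'e return map on a cross-section of the level $\{K=r^2\}$ (a section of the trivialized circle bundle over $M$), identify this map with a Hamiltonian diffeomorphism of $(M,\omega|_M)$, and invoke the Conley conjecture from \cite{Gi:conley}. The one step you flag as the main obstacle --- that the return map is Hamiltonian and not merely symplectic --- is precisely the point the paper also does not argue from scratch but delegates to \cite{Gi:FA} and \cite{Ar2}, so your outline is correct and complete at essentially the same level of detail.
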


This proposition does not rely on Theorem \ref{thm:main} and is an
immediate consequence of the results of \cite{Ar2,Gi:FA} and the
Conley conjecture; see \cite{Gi:conley} and also \cite{FH,Hi,SZ}. For
the sake of completeness, a detailed argument is given in Section
\ref{sec:pf-inf-many}. In a similar vein, in the setting of Theorem
\ref{thm:magn} with $M=\T^2$ and $K$ arising from a flat metric, the
level $K=r^2$ carries infinitely many periodic orbits for every (not
necessarily small) $r>0$.

\subsection{Outline of the proof of Theorem \ref{thm:main} 
and the organization of the paper}

The proof of Theorem \ref{thm:main} hinges on an interplay of two
counterparts: a version of the Sturm comparison theorem and a Floer
homological calculation. Namely, on the one hand, a Floer homological
calculation along the lines of \cite{GG} guarantees that almost all
low energy levels of $K$ carry periodic orbits with Conley--Zehnder
index depending only on the dimensions of $P$ and $M$. On the other
hand, since the levels of $K$ are fiber-wise convex in a tubular
neighborhood of $M$, a Sturm theoretic argument ensures that periodic
orbits with large period must also have large index. (Strictly
speaking, the orbits in question are degenerate and the
Conley--Zehnder index is not defined. Hence, we work with the
Salamon--Zehnder invariant $\Delta$, \cite{SZ}, but the Robin--Salamon
index, \cite{RS}, could be utilized as well.) Thus, the orbits
detected by Floer homology have period \emph{a priori} bounded from
above and the existence of periodic orbits on all levels follows from
the Arzela--Ascoli theorem.

The paper is organized as follows. In Section \ref{sec:SZ}, we recall
the definition and basic properties of the Salamon--Zehnder invariant
$\Delta$ and also prove a version of the Sturm comparison theorem
giving a lower bound for the growth of $\Delta$ in linear systems with
positive definite Hamiltonians. This lower bound is extended to
periodic orbits of $K$ near $M$ in Propositions \ref{prop:growth} and
\ref{prop:growth'} of Section \ref{sec:growth}, providing the
Sturm--theoretic counterpart of the proof of Theorem
\ref{thm:main}. In Section \ref{sec:proof-sa}, we prove Theorem
\ref{thm:main} under the additional assumptions that $P$ is
geometrically bounded and symplectically aspherical. In this case,
clearly illustrating the interplay between Sturm theory and Floer
homology, we can directly make use of a Floer homological calculation
from \cite{GG}.  Turning to the general case, we define in Section
\ref{sec:FN} a version of filtered Floer (or rather Floer--Novikov)
homology of compactly supported Hamiltonians on open manifolds.  The
relevant part of the calculation from \cite{GG} is extended to the
general setting in Section \ref{sec:general}. The proof of Theorem
\ref{thm:main} is completed in Section \ref{sec:gen-pf} where we also
discuss some other approaches to the problem.  Proposition
\ref{prop:inf-many} is proved in Section \ref{sec:pf-inf-many}.

\subsection*{Acknowledgements}
The authors are deeply grateful to Michael Entov, Ely Kerman, and
Leonid Polterovich for valuable discussions, remarks and suggestions.

\section{The Salamon--Zehnder invariant $\Delta$}
\labell{sec:SZ}

In this section we briefly review the properties of the invariant
$\Delta$, a continuous version of the Conley--Zehnder index introduced
in \cite{SZ}, used in the proof of Theorem \ref{thm:main}.

\subsection{Linear algebra}
\labell{sec:la}

Let $(V,\omega)$ be a symplectic vector space. Throughout this paper
we denote the group of linear symplectic transformations of $V$ by
$\Sp(V,\omega)$ or simply $\Sp(V)$ when the form $\omega$ is clear
from the context. Moreover, if $V$ is also equipped with a complex
structure $J$ we will use the notation $\U(V,\omega,J)$ or just
$\U(V)$ for the group of unitary transformations, i.e.,
transformations preserving $J$ and $\omega$. For $A\in \U(V)$ we
denote by $\det_{\C}A\in S^1$ the complex determinant of~$A$.

Salamon and Zehnder, \cite{SZ}, proved that there exists a unique
collection of continuous maps $\rho\colon \Sp(V,\omega)\to S^1\subset
\C$, where $(V,\omega)$ ranges through all finite-dimensional
symplectic vector spaces, with the following properties:

\begin{itemize}

\item
For any $A\in \Sp(V,\omega)$ and any linear isomorphism $B\colon W\to
V$, we have $\rho(B^{-1}AB)=\rho(A)$. (Note that $B^{-1}AB\in
\Sp(W,B^*\omega)$.) In particular, $\rho$ is conjugation invariant on
$\Sp(V,\omega)$.

\item
Whenever $A_1\in \Sp(V_1,\omega_1)$ and $A_2\in \Sp(V_2,\omega_2)$, we
have $\rho(A_1\times A_2)=\rho(A_1)\rho(A_2)$, where $A_1\times A_2$
is viewed as a symplectic transformation of $(V_1\times
V_2,\omega_1\times \omega_2)$.

\item For $A\in \U(V,\omega,J)$, we have $\rho(A)=\det_{\C}A$.

\item
For $A$ without eigenvalues on the unit circle, $\rho(A)=\pm 1$.
\end{itemize}

Note that $\rho(A)$ is completely determined by the eigenvalues of $A$
together with a certain ``ordering'' of eigenvalues, and in fact only
the eigenvalues of $A$ on the unit circle matter. It is also worth
emphasizing that $\rho$ is not smooth on $\Sp(V)$. Furthermore,
although in general $\rho(AB)\neq \rho(A)\rho(B)$, we have
$$
\rho(A^k)=\rho(A)^k
$$
for all $k\in\Z$. In particular, $\rho(A^{-1})=\overline{\rho(A)}$.

\subsection{The Salamon--Zehnder quasi-morphism $\Delta$}
\labell{sec:Delta}

\subsubsection{Definition and basic properties}
In this section we recall the definition and basic properties of the
\emph{Salamon--Zehnder invariant} $\Delta$ following closely
\cite{SZ}. Let $\Phi\colon [a,\,b]\to \Sp(V)$ be a continuous
path. Pick a continuous function $\lambda \colon [a,\,b]\to \R$ such
that $\rho(\Phi(t))=e^{2\pi i \lambda(t)}$ and set
$$
\Delta(\Phi)=\frac{\lambda(b)-\lambda(a)}{\pi}\in\R.
$$
It is clear that $\Delta(\Phi)$ is independent of the choice of
$\lambda$ and that geometrically $\Delta(\Phi)$ measures the total
angle swept by $\rho(\Phi(t))$ as $t$ varies from $a$ to $b$. Note
also that we do not require $\Phi(a)$ to be the identity
transformation.

As an immediate consequence of the definition, $\Delta(\Phi)$ is an
invariant of homotopy of $\Phi$ with fixed end-points. In particular,
$\Delta$ gives rise to a continuous map $\tSp(V)\to \R$, where
$\tSp(V)$ is the universal covering of $\Sp(V)$. Furthermore,
$\Delta(\Phi)$ is an invariant of (orientation preserving)
reparametrizations of $\Phi$. On the other hand, let $\Phi^{\inv}$ be
the path $\Phi$ traversed in the opposite direction. Then
$$
\Delta(\Phi^{\inv})=\Delta(\Phi^{-1})=-\Delta(\Phi).
$$
Finally, $\Delta$ is additive with respect to concatenation of
paths. More explicitly, assume that $a<c<b$. Then, in obvious
notation,
$$
\Delta(\Phi|_{[a,\,b]})=\Delta(\Phi|_{[a,\,c]})+\Delta(\Phi|_{[c,\,b]}).
$$

From conjugation invariance of $\rho$, we see that
$\Delta(\Psi^{-1}\Phi\Psi)=\Delta(\Phi)$ for any two continuous paths
$\Phi$ and $\Psi$ in $\Sp(V)$. Moreover, when $B\colon W\to V$ is a
symplectic transformation,
\begin{equation}
\labell{eq:conj}
\Delta(B^{-1}\Phi B)=\Delta (\Phi).
\end{equation}

Finally, assume that $\Phi(0)=I$ and $\Phi(T)-I$ is
non-degenerate. (Here $\Phi\colon [0,\, T]\to \Sp(V)$.) Then the
\emph{Conley--Zehnder index} $\MUCZ(\Phi)$ is defined (see \cite{CZ2}
and also, e.g., \cite{Sa,SZ}) and, as is shown in \cite{SZ},
\begin{equation}
\labell{eq:CZ-SZ1}
|\MUCZ(\Phi)-\Delta(\Phi)|\leq \dim V/2.
\end{equation}

We refer the reader to \cite{SZ} for proofs of these facts and for a
more detailed discussion of the invariant $\Delta$.

\subsubsection{The quasi-morphism property} One additional property
of $\Delta$ important for the proof of Theorem \ref{thm:main} is that
$\Delta\colon \tSp(V)\to \R$ is a \emph{quasi-morphism}, i.e., for any
two elements $\Phi$ and $\Psi$ in $\tSp(V)$, we have
\begin{equation}
\labell{eq:qm1}
|\Delta(\Psi\Phi)-\Delta(\Psi)-\Delta(\Phi)|\leq C,
\end{equation}
where the constant $C\geq 0$ is independent of $\Psi$ and $\Phi$.

To simplify the notation, throughout the rest of this section we will
denote by $C$ a positive constant depending only on $\dim V$ -- as is
the case in \eqref{eq:qm1}. However, $C$ may assume different values
in different formulas.

With this convention in mind, \eqref{eq:qm1} is easily seen to be
equivalent to that
\begin{equation}
\labell{eq:qm2}
|\Delta(A\Phi)-\Delta(\Phi)|\leq C
\end{equation}
for any continuous path $\Phi$ in $\Sp(V)$, not necessarily
originating at the identity, and for any $A\in\Sp(V)$.

The quasi-morphism property \eqref{eq:qm1} is well known to hold for
several other maps $\tSp(V)\to\R$ which are similar to $\Delta$ (see
\cite{BG}) and can be established for $\Delta$ in a number of ways as
a consequence of the quasi-morphism property for one of these maps.

For instance, recall that every $A\in \Sp(V)$ can be uniquely
represented as a product $A=QU$, where $U$ is unitary (with respect to
a fixed, compatible with $\omega$ complex structure) and $Q$ is
symmetric and positive definite. (This is the so-called \emph{polar
decomposition}.) Set $\trho(A)=\det_{\C}U$ and define $\tDelta$ in the
same way as $\Delta$, but with $\trho$ in place of $\rho$.  (In
contrast with $\rho$ and $\Delta$, the maps $\trho$ and $\tDelta$
depend on the choice of complex structure.)  It is known that the map
$\tDelta\colon \tSp(V)\to \R$ is a quasi-morphism; see \cite{Du} and
also \cite{BG} for further references. Furthermore, as is shown in
\cite[Section C-2]{BG},
$\Delta(\Phi)=\lim_{k\to\infty}\tDelta(\Phi^k)/k$ for
$\Phi\in\tSp(V)$. Now it is easy to see that \eqref{eq:qm1} holds for
$\Delta$ since it holds for $\tDelta$.

\begin{Remark}
Alternatively, to prove \eqref{eq:qm1}, one can first show that
$|\tDelta-\Delta|\leq C$ on $\tSp(V)$ and then use again the fact that
$\tDelta$ is a quasi-morphism.  (This argument was communicated to us
by M. Entov and L. Polterovich, \cite{Po}.) In fact, once the
inequality $|\tDelta-\Delta|\leq C$ is established, it is not hard to
prove directly that both maps $\Delta$ and $\tDelta$ are
quasi-morphisms by using the polar decomposition and ``alternating''
between these two maps.  The only step which is, perhaps, not
immediate is that \eqref{eq:qm2} holds for $\Delta$ when $A$ and
$\Phi$ are both symmetric and positive definite. This, however,
follows from the elementary fact that in this case the eigenvalues of
$A\Phi(t)$ are real for all $t$ (even though $A\Phi(t)$ is not
necessarily symmetric), and hence $\Delta(A\Phi)=\Delta(\Phi)=0$.
\end{Remark}

\begin{Remark}
It is worth mentioning that any of Maslov type quasi-morphisms on
$\tSp(V)$ (see, e.g., \cite{BG,EP,RS,SZ}) can be used in the proof of
Theorem \ref{thm:main}. The only features of a quasi-morphism
essential for the argument are the normalization (behavior on $\U(V)$)
and the Sturm comparison theorem (Proposition \ref{prop:comparison}
below).  The latter obviously holds for any of these quasi-morphisms,
once it is established for one, for the difference between any two of
such quasi-morphisms is bounded.  The properties that set $\Delta$
apart from other quasi-morphisms are that $\Delta$ is continuous and
conjugation invariant and homogeneous (i.e.,
$\Delta(\Phi^k)=k\Delta(\Phi)$; see \cite{SZ}). These facts, although
used in the proof for the sake of simplicity, are not really crucial
for the argument.
\end{Remark}

\subsection{Sturm comparison theorem}
\labell{sec:sturm}

A time-dependent, quadratic Hamiltonian $H(t)$ on $(V,\omega)$
generates a linear time-dependent flow $\Phi_H(t)\in\Sp(V)$ via
the Hamilton equation. Once $V$ is identified with $\R^{2n}=\C^n$,
this equation takes the form
$$
\dot{\Phi}_H=JH(t)\Phi_H(t),
$$
where $J$ is the standard complex structure. We say that $H_1\geq H_0$
when $H_1-H_0$ is positive semi-definite, i.e., $H_1-H_0$ is a
non-negative function on $V$. Likewise, we write $H_1-H_0>0$ if
$H_1-H_0$ is positive definite.

\begin{Proposition}[Sturm Comparison Theorem]
\labell{prop:comparison}
Assume that $H_1\geq H_0$ for all $t$. Then
$$
\Delta(\Phi_{H_1})\geq \Delta(\Phi_{H_0}) -C
$$
as functions of $t$.
\end{Proposition}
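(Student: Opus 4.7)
The plan is to combine two ingredients: the quasi-morphism property \eqref{eq:qm1} of $\Delta$, which reduces the comparison to the case $H_0\equiv 0$, and a monotonicity statement asserting that $\Delta(\Phi_H|_{[0,T]})$ is bounded below by a constant depending only on $\dim V$ whenever $H(t)\geq 0$ for all $t$.

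First, set $\tilde\Phi(t):=\Phi_{H_0}(t)^{-1}\Phi_{H_1}(t)$, so that $\Phi_{H_1}(t)=\Phi_{H_0}(t)\tilde\Phi(t)$ pointwise in $\Sp(V)$. Differentiating and using the symplectic identity $\Phi_{H_0}J\Phi_{H_0}^T=J$, a direct computation shows that $\tilde\Phi$ is the Hamiltonian flow of
\[
\tilde H(t):=\Phi_{H_0}(t)^T\bigl(H_1(t)-H_0(t)\bigr)\Phi_{H_0}(t),
\]
which is symmetric and positive semi-definite since $H_1\geq H_0$ and $Q\mapsto \Phi_{H_0}^TQ\Phi_{H_0}$ preserves non-negativity. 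Because the pointwise product $\Phi_{H_0}\tilde\Phi$ of paths in $\Sp(V)$ is homotopic rel endpoints to the concatenation representing the product $[\Phi_{H_0}]\cdot[\tilde\Phi]\in\tSp(V)$, the quasi-morphism inequality \eqref{eq:qm1} yields
\[
\Delta(\Phi_{H_1})\ \geq\ \Delta(\Phi_{H_0})+\Delta(\tilde\Phi)-C.
\]
Hence the theorem reduces to the claim that $\Delta(\Psi|_{[0,T]})\geq -C'$ for every flow $\Psi=\Phi_{\tilde H}$ with $\Psi(0)=I$ generated by $\tilde H(t)\geq 0$.

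For this reduced claim I would pass from $\Delta$ to the Robbin--Salamon index $\mu_{RS}$ of \cite{RS}, using that $|\mu_{RS}-\Delta|$ is bounded by $\dim V$ on paths starting at the identity (the analogue of \eqref{eq:CZ-SZ1} for possibly degenerate endpoints, proved by the same deformation argument). The index $\mu_{RS}(\Psi)$ is a sum of signatures of crossing forms at the times $t_*$ where $\Psi(t_*)$ has $1$ as an eigenvalue, with half-signature contributions at the endpoints. At a regular crossing, using $\dot\Psi=J\tilde H\Psi$, $\Psi(t_*)v=v$, and the compatibility identity $\omega(\cdot,J\cdot)=\langle\cdot,\cdot\rangle$, the crossing form on $\ker(\Psi(t_*)-I)$ is
\[
\Gamma(v,v)=\omega\bigl(v,\dot\Psi(t_*)v\bigr)=\omega\bigl(v,J\tilde H(t_*)v\bigr)=\langle v,\tilde H(t_*)v\rangle\geq 0.
\]
Therefore every interior crossing contributes non-negatively to $\mu_{RS}$, the only possibly negative term being the half-signature at $t=0$, of absolute value at most $\dim V/2$. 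Degenerate crossings and a possibly degenerate right endpoint are handled by an arbitrarily small generic perturbation, using continuity of $\Delta$. We conclude $\mu_{RS}(\Psi)\geq-\dim V/2$, hence $\Delta(\Psi)\geq -C'$, which completes the argument.

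The principal difficulty is the lack of smoothness of $\Delta$ on $\Sp(V)$, which rules out a naive pointwise ``$d\lambda/dt\geq 0$'' argument. Recasting the monotonicity via $\mu_{RS}$ replaces the would-be differential inequality by a finite sum of signatures of quadratic forms, each directly controlled by the sign of $\tilde H$. An alternative, foreshadowed in the Remark following \eqref{eq:qm2}, is to use the polar-decomposition quasi-morphism $\tilde\Delta$: one differentiates $\det_{\C}U(t)$ for the unitary part $U$ of $\Phi_{\tilde H}(t)$ and bounds its logarithmic derivative from below in terms of the ``Hermitian part'' of $\tilde H$, obtaining the same estimate modulo the uniform bound $|\tilde\Delta-\Delta|\leq C$. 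The Robbin--Salamon route appears cleaner.
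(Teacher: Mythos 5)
Your argument is correct in substance, and it reaches the conclusion by a genuinely different reduction than the paper. The paper interpolates the Hamiltonians, $H_s=(1-s)H_0+sH_1$, homotopes $\Phi_{H_1}|_{[0,T]}$ rel endpoints to the concatenation of $\Phi_{H_0}|_{[0,T]}$ with the path $s\mapsto\Phi_s(T)$, and then must prove an auxiliary lemma -- that the quadratic Hamiltonian $K_s(T)$ generating the $s$-family is positive definite, via the evolution equation $\dot K_s=\dot H_s+\{K_s,H_s\}$ and an inertia-index argument -- before applying the Robbin--Salamon signature formula to the $s$-path and invoking \eqref{eq:CZ-SZ1}. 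You instead factor $\Phi_{H_1}(t)=\Phi_{H_0}(t)\tilde\Phi(t)$ pointwise, observe (correctly, given $\Phi^{-1}J=J\Phi^{T}$ for symplectic $\Phi$) that $\tilde\Phi$ is the flow of $\tilde H=\Phi_{H_0}^T(H_1-H_0)\Phi_{H_0}\geq 0$, split off $\Delta(\Phi_{H_0})$ with the full quasi-morphism inequality \eqref{eq:qm1}, and then bound $\Delta(\tilde\Phi)$ from below by non-negativity of the Robbin--Salamon crossing forms in the $t$-variable. The end mechanism (crossing signatures controlled by the sign of the generator, plus the bounded difference between $\Delta$ and an integer-valued index) is the same as the paper's; what your quotient-flow decomposition buys is that the relevant generator is non-negative by inspection, so the paper's interpolation lemma about $K_s(T)$ is not needed, at the price of using \eqref{eq:qm1} in full rather than only \eqref{eq:qm2}.

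Two small points to tighten. First, your treatment of degenerate crossings by an ``arbitrarily small generic perturbation'' is the one step that is not quite right as stated: a generic perturbation of the path need not keep the crossing forms non-negative, and signatures are not continuous. The correct move is the one the paper makes at the outset: using continuity of $\Delta$, first replace $H_1$ so that $H_1-H_0>0$; then $\tilde H>0$, every crossing is automatically regular with positive definite crossing form (in particular crossings are isolated), and no further genericity is needed -- only a possible adjustment at the endpoint $t=T$, absorbed into the constant via \eqref{eq:CZ-SZ1} and continuity of $\Delta$. Second, if the flows are not assumed to start at the identity (the paper's proof allows this), normalize first with \eqref{eq:qm2} so that $\Phi_{H_0}(0)=\Phi_{H_1}(0)$ and hence $\tilde\Phi(0)=I$; otherwise the identification of the pointwise product with the product in $\tSp(V)$, and the bound $|\mu_{\mathrm{RS}}-\Delta|\leq\const$, are only stated for paths based at the identity.
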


This result is yet another version of the comparison theorem in
(symplectic) Sturm theory, similar to those established in, e.g.,
\cite{Ar,Bo,Ed}. The proposition can be easily verified by combining
the construction of the generalized Maslov index, \cite{RS}, with the
Arnold comparison theorem, \cite{Ar}, and utilizing
\eqref{eq:CZ-SZ1}. For the sake of completeness, we give a detailed
proof.

\begin{proof}
Due to continuity of $\Delta$, by perturbing $H_1$ and $H_0$ if
necessary, we may assume without loss of generality that $H_1-H_0>0$
for all $t$. Furthermore, by the quasi-morphism property
\eqref{eq:qm2}, we may also assume that $\Phi_{H_0}(0)=\Phi_{H_1}(0)$.

Set $H_s=(1-s)H_0+sH_1$ and let $\Phi_s(t)$ stand for the flow of
$H_s$ with the initial condition $\Phi_s(0)$ independent of $s$. Thus
\begin{equation}
\labell{eq:c1}
\dot{\Phi}_s=JH_s\Phi_s.
\end{equation}

Fix $T>0$. The path $\Phi_1(t)$ with $t\in [0,\, T]$ is homotopic to
the concatenation of $\Phi_0(t)$ and the path $\Psi(s)=\Phi_s(T)$,
$s\in [0,\,1]$. Hence, it suffices to show that
\begin{equation}
\labell{eq:c2}
\Delta(\Psi)\geq -C.
\end{equation}

Denote by $K_s(t)$ the quadratic Hamiltonian generating the family
$s\mapsto \Phi_s(t)$ for a fixed time $t\in [0,\, T]$.  To establish
\eqref{eq:c2}, let us first show that $K_s(T)>0$ for all $s\in
[0,\,1]$.  Using continuity of $\Delta$ as above, we may assume
without loss of generality that $K_s(t)$ degenerates only for a finite
collection of points $0=t_0< t_1<\ldots< t_k<T$. It is well known that
the positive inertia index of $K_s(t)$ increases as $t$ goes through
$t_i$ provided that the restriction of $\dot{K}_s(t_i)$ to $\ker
K_s(t_i)$ is positive definite; see e.g., \cite{Ar}.  Linearizing the
Hamilton equation \eqref{eq:c1} with respect to $s$, we obtain by a
simple calculation that
$$
\dot{K}_s=\dot{H}_s+\{K_s,H_s\},
$$
where $\{K_s,H_s\}=H_sJK_s-K_sJH_s$ (the Poisson bracket). Note that
$\{K_s,H_s\}(x)=-2\left<K_sx,JH_sx\right>$. Hence, $\{K_s,H_s\}(t_i)$
vanishes on $\ker K_s(t_i)$. Furthermore, $\dot{H}_s=H_1-H_0>0$ on $V$
and, as a consequence, $\dot{K}_s(t_i)$ is positive definite on $\ker
K_s(t_i)$.  Finally, $K_s(0)=0$, for $\Phi_s(0)$ is independent of
$s$, and we conclude that $K_s(t)> 0$ for all $s\in [0,\,1]$ and all
$t\in (0,\, T]$ and, in particular, for $t=T$.

Returning to the proof of \eqref{eq:c2}, set
$\tPsi(s)=\Psi(s)\Psi(0)^{-1}$.  This family is again generated by
$K_s(T)$, but now the initial condition is $\tPsi(0)=I$. Due to the
quasi-morphism property \eqref{eq:qm2}, it suffices to prove that
$\Delta(\tPsi)\geq -C$. We will show that $\Delta(\tPsi)\geq 0$.  As
above, by continuity, we may assume that $I-\tPsi(s)$ degenerates only
for a finite collection of points $0=s_0< s_1<\ldots< s_l<1$.  (In
particular, $I-\tPsi(1)$ is non-degenerate.) Then $\MUCZ(\tPsi)$ is
defined and, as is proved in \cite{RS},
$$
\MUCZ(\tPsi)=\frac{1}{2}\sign(K_0(T))+\sum_i
\sign \left(K_{s_i}(T)|_{V_i}\right),
$$
where $V_i=\ker(I-\tPsi_{s_i}(T))$ and $\sign$ denotes the signature
of a quadratic form. Since, $K_s(T)>0$ for all $s$, we see that
$\MUCZ(\tPsi)\geq n$ and, by \eqref{eq:CZ-SZ1}, $\Delta(\tPsi)\geq
0$. This completes the proof of \eqref{eq:c2} and the proof of the
proposition.
\end{proof}

\begin{Example}
\labell{ex:1}

Let $H(t)$ be a quadratic Hamiltonian on $\R^{2n}$ such that
$H(t)(X)\geq \alpha \| X\|^2$ for all $t$, where $\|X\|$ stands for
the standard Euclidean norm of $X\in\R^{2n}$ and $\alpha$ is a
constant. Then, for all $t$,
$$
\Delta(\Phi_H)\geq 2n\alpha \cdot t-C.
$$
More generally, let $H(t)$ be a quadratic Hamiltonian on
$\R^{2n_1}\times\R^{2n_2}$ such that $H(t)((X,Y))\geq \alpha \|
X\|^2-\beta\|Y\|^2$ for all $t$, where $X\in\R^{2n_1}$ and
$Y\in\R^{2n_2}$ and $\alpha$ and $\beta$ are constants.  Then
$$
\Delta(\Phi_H)\geq 2(n_1\alpha-n_2\beta)t-C.
$$
These inequalities readily follow from Proposition
\ref{prop:comparison} by a direct calculation.
\end{Example}

\subsection{The Salamon--Zehnder invariant for integral curves}

\labell{sec:SZ-Hamiltonian}
\subsubsection{Definitions}
\labell{sec:integral-curves}

Let $\gamma\colon [0,\,T]\to P$ be an integral curve of the
Hamiltonian flow $\varphi^t_H$ of a time-dependent Hamiltonian $H=H_t$
on a symplectic manifold $P$. Let also $\xi$ be a symplectic
trivialization of $TP$ along $\gamma$, i.e., $\xi(t)$ is a symplectic
basis in $T_{\gamma(t)}P$ depending smoothly or continuously on
$t$. The trivialization $\xi$ gives rise to a symplectic
identification of the tangent spaces $T_{\gamma(t)}P$ with
$T_{\gamma(0)}P$, and hence the linearization of $\varphi^t_H$ along
$\gamma$ can be viewed as a family $\Phi(t)\in\Sp(T_{\gamma(0)}P)$. We
set $\Delta_\xi(\gamma):=\Delta(\Phi)$. This is the Salamon--Zehnder
invariant of $\gamma$ with respect to $\xi$. Clearly,
$\Delta_\xi(\gamma)$ depends on $\xi$.

Assume now that $\gamma$ is a contractible $T$-periodic orbit of $H$.
Recall that a \emph{capping} of $\gamma$ is an extension of $\gamma$
to a map $v\colon D^2\to P$. A capping gives rise to a symplectic
trivialization of $TP$ along $v$ and hence along $\gamma$, unique up
to homotopy, and we denote by $\Delta_v(\gamma)$ the Salamon--Zehnder
invariant of $\gamma$ evaluated with respect to this
trivialization. Note that $\Delta_v(\gamma)$ is determined entirely by
the homotopy class of $v$ and it is well known that adding a sphere
$w\in \pi_2(P)$ to $v$ results in the Salamon--Zehnder invariant
changing by $-2\int_w c_1(TP)$. In particular,
$\Delta(\gamma):=\Delta_v(\gamma)$ is independent of $v$ whenever
$c_1(TP)|_{\pi_2(P)}=0$.

When $\gamma$ is \emph{non-degenerate}, i.e., $d\varphi_H^T\colon
T_{\gamma(0)}P\to T_{\gamma(0)}P$ does not have one as an eigenvalue,
the Conley--Zehnder index $\MUCZ(\gamma)$ is defined as $\MUCZ(\Phi)$
in the same way as $\Delta(\gamma)$ by using a trivialization along
$\gamma$; see \cite{CZ2,Sa,SZ}. Then inequality \eqref{eq:CZ-SZ1}
relating $\Delta$ and $\MUCZ$ turns into
\begin{equation}
\labell{eq:CZ-SZ}
|\MUCZ(\gamma)-\Delta(\gamma)|\leq \dim P/2.
\end{equation}
Note that in general $\MUCZ(\gamma)$ depends on the choice of
trivialization along $\gamma$. Thus, in \eqref{eq:CZ-SZ} we assumed
that both invariants are taken with respect to the same
trivialization, e.g., with respect to the same capping, unless
$c_1(TP)|_{\pi_2(P)}=0$ and the choice of capping is immaterial for
either invariant; see, e.g., \cite{Sa}. When the choice of capping $v$
is essential, we will use the notation $\Delta_v(\gamma)$ and
$\MUCZ(\gamma,v)$.

\begin{Example}
\labell{ex:2}

Let $K\colon \R^{2n}\to \R$ be a convex autonomous Hamiltonian such
that $d^2 K\geq \alpha\cdot I$ at all points, where $\alpha$ is a
constant. Then, as is easy to see from Example \ref{ex:1},
$\Delta(\gamma)\geq 2n\alpha\cdot T-C$ for any integral curve
$\gamma\colon [0,\, T]\to \R^{2n}$.  Note that here $\Delta(\gamma)$
is evaluated with respect to the standard Euclidean trivialization and
we are not assuming that the curve $\gamma$ is closed.
\end{Example}

\subsubsection{Change of the Hamiltonian}
Consider two autonomous Hamiltonians $H$ and $K$ on a symplectic
manifold $P$ such that $H$ is an increasing function of $K$, i.e.,
$H=f\circ K$, where $f\colon\R\to\R$ is an increasing function. Let
$\gamma$ be a periodic orbit of $K$ lying on an energy level,
which is regular for both $K$ and $H$. 
Then $\gamma$ can also be viewed, up to a change of time, as a
periodic orbit of $H$. Fixing a trivialization of $TP$ along $\gamma$,
we have the Salamon--Zehnder invariants, $\Delta(\gamma,K)$ and
$\Delta(\gamma,H)$ of $\gamma$ defined for the flows of $K$ and
$H$. The following result, used in the proof of Theorem
\ref{thm:main}, is nearly obvious:

\begin{Lemma}
\labell{lemma:change}
Under the above assumptions, $\Delta(\gamma,K)=\Delta(\gamma,H)$.
\end{Lemma}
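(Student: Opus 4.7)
The plan is to reparametrize time so that both linearized flows live over a common interval, to recognize their difference as a shear in the direction of the orbit, and to observe that such a shear does not affect $\rho$.

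First, I would exploit the reparametrization invariance of $\Delta$. Since $\gamma$ lies on a common regular level $\{K=c\}$ and $X_H = f'(c)\,X_K$ there, with $\lambda:=f'(c)>0$, the curve $s\mapsto\gamma(\lambda s)$ is an $H$-orbit of period $T/\lambda$. Up to the (harmless) time rescaling $\tau = \lambda s$, the invariant $\Delta(\gamma,H)$ is computed from the path
$\hat\Phi_H(\tau):=d\phi_H^{\tau/\lambda}\big|_{\gamma(0)}$ for $\tau\in[0,T]$,
while $\Delta(\gamma,K)$ is computed from $\Phi_K(\tau):=d\phi_K^{\tau}\big|_{\gamma(0)}$ on the same interval.

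Next, I would relate $\hat\Phi_H$ to $\Phi_K$ by differentiating the identity $\phi_H^s(x)=\phi_K^{s f'(K(x))}(x)$ in $x$ at $x=\gamma(0)$. A direct computation gives, in any trivialization of $TP$ along $\gamma$,
\begin{equation*}
\hat\Phi_H(\tau) \;=\; \Phi_K(\tau) \;+\; \tau\nu\, N(\tau),
\end{equation*}
where $\nu=f''(c)/f'(c)$ and $N(\tau)$ is the rank-one nilpotent operator $v\mapsto dK|_{\gamma(0)}(v)\cdot X_K(\gamma(\tau))$. Choose the symplectic trivialization $\xi$ so that $\xi(\tau) e_1 = X_K(\gamma(\tau))$ and $\xi(\tau)e_2 = Y(\tau)$ with $\omega(X_K,Y)=1$ (hence $dK(Y)=1$), completing to a symplectic frame on the symplectic normal to the orbit. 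In this trivialization $N(\tau)=N_0$ is the constant nilpotent $e_2^{*}\otimes e_1$, the flow $\Phi_K(\tau)$ preserves the vector $e_1$ (eigenvector of the linearized monodromy) and the functional $e_2^{*}=dK$ (energy conservation), and a short verification shows $\Phi_K(\tau)N_0=N_0=N_0\Phi_K(\tau)$. Consequently
\begin{equation*}
\hat\Phi_H(\tau) \;=\; \Phi_K(\tau)\cdot S(\tau),\qquad S(\tau):=I+\tau\nu N_0,
\end{equation*}
with $S(\tau)$ a unipotent shear commuting with $\Phi_K(\tau)$ for every $\tau$.

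Finally, I would show that $\rho(\hat\Phi_H(\tau))=\rho(\Phi_K(\tau))$ for all $\tau$, which immediately yields $\Delta(\hat\Phi_H)=\Delta(\Phi_K)$ and hence the lemma. Since $S(\tau)$ commutes with $\Phi_K(\tau)$ and is unipotent, the two matrices preserve the symplectic splitting $V=E\oplus E^{\omega}$, where $E=\mathrm{span}(e_1,e_2)$, with $\Phi_K(\tau)|_E$ and $S(\tau)|_E$ both upper-triangular shears with eigenvalue $1$ and $S(\tau)|_{E^{\omega}}=I$. By the multiplicativity of $\rho$ on this symplectic direct sum and the fact that $\rho$ equals $1$ on the unipotent parabolic locus, one obtains $\rho(\Phi_K(\tau)S(\tau))=\rho(\Phi_K(\tau))$ pointwise in $\tau$.

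\smallskip

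The main obstacle lies in the very last step: showing rigorously that $\rho(AB)=\rho(A)$ when $A\in\Sp(V)$ and $B$ is a unipotent shear commuting with $A$. While intuitively clear from the eigenvalue description of $\rho$, a clean argument requires either a direct appeal to the axiomatic characterization of $\rho$ (using conjugation invariance and its normalization on $\U(V)$) or a continuous deformation $s\mapsto \Phi_K(\tau)(I+s\tau\nu N_0)$, $s\in[0,1]$, along which the eigenvalues are constant and the parabolic type is preserved, so that no winding of $\rho$ can occur.
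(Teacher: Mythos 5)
Your route is genuinely different from the paper's. The paper interpolates linearly, setting $H_s=(1-s)K+sH$ (after normalizing periods), uses the concatenation property of $\Delta$ to reduce the lemma to showing that the end path $s\mapsto d\varphi^1_{H_s}$ contributes zero, and obtains this from the fact that all these time-one maps have the same eigenvalues, because the linearized return map inside the common level is independent of $s$. Your starting point -- reparametrization invariance plus the explicit factorization $\hat\Phi_H(\tau)=\Phi_K(\tau)S(\tau)$ with $S(\tau)=I+\tau\nu N_0$ in a frame adapted to $X_K$ and $dK$ -- is correct: the differentiation of $\phi_H^s(x)=\phi_K^{sf'(K(x))}(x)$, the constancy of $N_0$ in the adapted frame, and the identities $\Phi_K(\tau)N_0=N_0=N_0\Phi_K(\tau)$ all check out, and this is a legitimate alternative to the paper's interpolation.

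There are, however, two genuine problems with the final step as written. First, the asserted mechanism is false: $\Phi_K(\tau)$ does \emph{not} in general preserve the symplectic splitting $V=E\oplus E^{\omega}$ with $E=\mathrm{span}(e_1,e_2)$. The linearized flow fixes the line $\R e_1$ (the $X_K$-direction) and the hyperplane $\ker e_2^{*}$ (energy conservation), but there is no reason for it to map $e_2$ back into $E$; hence ``$\Phi_K(\tau)|_E$'' is undefined and multiplicativity of $\rho$ on a splitting cannot be invoked. What is true, and what rescues the step, is the flag statement: since $\Phi_K(\tau)N_0=N_0$, one has $\Phi_K(\tau)(I+s\tau\nu N_0)=\Phi_K(\tau)+s\tau\nu N_0$, which coincides with $\Phi_K(\tau)$ on $\ker e_2^{*}$ and induces the identity on the one-dimensional quotient; so its characteristic polynomial is independent of $s$. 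Then your ``fallback'' deformation argument -- constant spectrum along a continuous path, plus continuity of $\rho$ and the fact that $\rho$ is determined by the eigenvalues up to a finite ambiguity -- gives $\rho(\hat\Phi_H(\tau))=\rho(\Phi_K(\tau))$ pointwise and hence $\Delta(\hat\Phi_H)=\Delta(\Phi_K)$; this is exactly the principle the paper itself uses, so the splitting argument should simply be discarded in favor of it. Second, the lemma is stated for an \emph{arbitrary} fixed trivialization (and is applied with the trivialization induced by a capping), whereas you prove the identity only in your specially adapted frame. You must add the transfer step: an adapted frame can be chosen to close up along the orbit, and changing the trivialization along the closed orbit shifts $\Delta(\gamma,K)$ and $\Delta(\gamma,H)$ by the same amount, namely the winding of $\rho$ along the transition loop, so the equality is insensitive to this choice. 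Without that remark the argument does not yet establish the statement in the form in which the paper uses it.
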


\begin{proof} Set $H_s=(1-s)K+sH$, where $s\in [0,\, 1]$. These Hamiltonians
are functions of $H_0=K$ and the level containing $\gamma$ is regular
for each $H_s$.  Furthermore, after multiplying $K$ and $H$ by
positive constants, we may assume that $\gamma$ has period equal to
one for all $H_s$. Denote by $\Phi_s(t)$ the linearization of the flow
$\varphi^t_{H_s}$ of $H_s$ along $\gamma$ interpreted, using the
trivialization, as a path in $\Sp(T_zP)$, where
$z=\gamma(0)$. Clearly,
$$
\Delta(\gamma,K)=\Delta(\Phi_0)\text{ and }\Delta(\gamma,H)=\Delta(\Phi_1).
$$
The path $\Phi_1(t)$ is homotopic to the concatenation of $\Phi_0(t)$
and the path $\Psi(s)=\Phi_s(1)$. Hence,
$$
\Delta(\Phi_1)=\Delta(\Phi_0)+\Delta(\Psi),
$$
and it is sufficient to show that $\Delta(\Psi)=0$. To this end, we
will prove that all maps $\Psi(s)=(d\varphi^1_{H_s})_z\colon T_zP\to
T_zP$ have the same eigenvalues.

Note that for all $s$ the maps $\Psi(s)$ are symplectic and preserve
the hyperplane $E$ tangent to the energy level through $z$. The
eigenvalues of $\Psi(s)$ are those of $\Psi(s)|_E$ and the eigenvalue
one corresponding to the normal direction to $E$. Furthermore, all
maps $\Psi$ also preserve the one-dimensional space $E^\omega$ spanned
by $\gamma'(0)$ and are equal to the identity on this space. The
quotient $E/E^\omega$ can be identified with the space normal to
$\gamma'(0)$ in $E$ and the map $\bar{\Psi}(s)\colon E/E^\omega\to
E/E^\omega$ induced by $\Psi(s)|_E$ is the linearized return map along
$\gamma$ in the energy level containing $\gamma$. 
Thus, this map is independent of $s$. As a consequence, the
maps $\Psi(s)|_E$, and hence $\Psi(s)$, have the same eigenvalues for
all $s\in [0,\, 1]$.
\end{proof}

\section{Sturm comparison theorems for periodic orbits near
Morse--Bott non-degenerate symplectic extrema}
\labell{sec:growth}

\subsection{Growth of $\Delta$} Let, as in Theorem \ref{thm:main},
$K\colon P\to \R$ be an autonomous Hamiltonian attaining its
Morse-Bott non-degenerate minimum $K=0$ along a closed symplectic
submanifold $M\subset P$. The key to the proof of Theorem
\ref{thm:main} is the following result, generalizing Example
\ref{ex:2}, which is essentially a version of the Sturm comparison
theorem for $K$:

\begin{Proposition}
\labell{prop:growth}
Assume that $c_1(TP)=0$. Then there exist constants $a>0$ and $c$ and
$r_0>0$ such that, whenever $0<r<r_0$,
\begin{equation}
\labell{eq:growth}
\Delta(\gamma)\geq a\cdot T-c
\end{equation}
for every contractible $T$-periodic orbit $\gamma$ of $K$ on the level
$K=r^2$.
\end{Proposition}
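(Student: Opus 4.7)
\medskip

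The plan is to apply the Sturm comparison theorem (Proposition~\ref{prop:comparison}) to the linearization of the Hamiltonian flow along $\gamma$, bounding this linear system from below by a split quadratic Hamiltonian of the type treated in the second part of Example~\ref{ex:1}. The necessary positivity is supplied by the convexity of $K$ in the symplectic normal directions to $M$ (Morse--Bott minimum), while the directions tangent to $M$ are harmless because the Hessian of $K$ vanishes on $M$ and is therefore only of order $r^2$ along any orbit on $\{K=r^2\}$ lying close to $M$.

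\medskip

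In detail, I would first invoke Weinstein's symplectic tubular neighborhood theorem to identify a neighborhood of $M$ in $P$ with a neighborhood of the zero section in the symplectic normal bundle $\pi\colon TM^\perp\to M$, and pick Darboux-type coordinates $(x,y)$ with $x$ a local coordinate on $M$ and $y$ a linear coordinate on the fibers. The Morse--Bott hypothesis then yields the expansion
\begin{equation*}
K(x,y)=Q_x(y)+O(|y|^3),\qquad Q_x(y)\geq \alpha_0\,|y|^2,
\end{equation*}
for some uniform $\alpha_0>0$, and energy conservation forces $\gamma\subset\{|y|\leq c_0 r\}$ for $r$ small. Trivializing $TP$ along $\gamma$ by a symplectic frame adapted to the splitting $TP|_M=TM\oplus TM^\perp$ near $M$ (a choice that legitimately computes $\Delta(\gamma)$ under the assumption $c_1(TP)=0$), direct differentiation of the expansion gives the block structure of the generator $H_t=d^2K(\gamma(t))$ of the linearized flow:
\begin{equation*}
H_t=\begin{pmatrix} A_t & B_t \\ B_t^{*} & C_t \end{pmatrix},\qquad \|A_t\|=O(r^2),\quad \|B_t\|=O(r),\quad C_t\geq(2\alpha_0-O(r))\,I,
\end{equation*}
in the tangent$\oplus$normal decomposition $\R^{2n_T}\oplus\R^{2n_N}$. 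A Young inequality of the form $2X^{*}B_tY\geq -(\alpha_0/2)\|Y\|^2-(2\alpha_0^{-1})\|B_t\|^2\|X\|^2$ absorbs the cross block into the positivity of $C_t$ and yields
\begin{equation*}
H_t\bigl((X,Y)\bigr)\geq -C'r^2\,\|X\|^2+\alpha_0\,\|Y\|^2
\end{equation*}
for all sufficiently small $r$. Applying Proposition~\ref{prop:comparison} together with the split estimate of Example~\ref{ex:1} then produces
\begin{equation*}
\Delta(\gamma)\geq 2\bigl(n_N\alpha_0-n_T C'r^2\bigr)T-C,
\end{equation*}
where $n_N=(\dim P-\dim M)/2$ and $n_T=\dim M/2$. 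For $r_0$ small enough the coefficient of $T$ is bounded below uniformly in $r\in(0,r_0)$ by a positive constant $a$, which is the desired inequality \eqref{eq:growth}.

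\medskip

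The step I expect to demand the most care is the treatment of the cross block $B_t$: its size $O(r)$ is a priori much larger than the tangential block $\|A_t\|=O(r^2)$, and only the strict positive definiteness of the normal block $C_t\geq(2\alpha_0-O(r))I$ allows one to absorb $B_t$, via Young's inequality, into an $O(r^2)$ perturbation of $A_t$. A secondary, book-keeping subtlety is to verify that the symplectic trivialization adapted to the Weinstein splitting legitimately computes $\Delta(\gamma)$: this is exactly where the full hypothesis $c_1(TP)=0$ enters, as the Weinstein-splitting trivialization may extend over a 2-chain bounding $\gamma$ that differs from a capping disk by a 2-cycle that need not be represented by a sphere, so the weaker condition $c_1(TP)|_{\pi_2(P)}=0$ would not suffice for this step.
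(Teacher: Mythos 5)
There is a genuine gap: your argument is essentially the single-chart computation, but a $T$-periodic orbit on $\{K=r^2\}$ need not lie in one Darboux chart or admit a single coordinate system $(x,y)$ in which the linearized flow is generated by the naive Hessian. Since bounding $T$ is precisely what is at stake, the orbit can be very long (its length is of order $r\cdot T$ by \eqref{eq:length}, with $T$ a priori unbounded), and its projection can wander all over the closed manifold $M$, where neither global coordinates nor a global symplectic trivialization of $TM\oplus TM^\perp$ exist. Once you choose \emph{some} symplectic frame along $\gamma$ ``adapted to the splitting,'' two things are no longer automatic: (1) the generator of the linearized flow in that frame is the Hessian only up to a correction coming from the rotation of the frame, which accumulates along the orbit; and (2) the homotopy class of that frame relative to a capping trivialization is not controlled by the pointwise condition $c_1(TP)=0$ alone --- an adapted unitary frame can wind arbitrarily in $\U(TM)\times\U(TM^\perp)$, and that winding shifts $\Delta(\gamma)$ by an amount you have not bounded. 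Your sentence that the adapted trivialization ``legitimately computes $\Delta(\gamma)$'' is exactly the unproved step; as stated it does not follow, and without it the inequality $\Delta(\gamma)\geq aT-c$ is not established.

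The paper's proof is devoted almost entirely to closing this gap. The chart-wise estimate you derive (your Young-inequality absorption of the cross block is the same mechanism as \eqref{eq:hessian-lower-bound} and (DF2), and is fine) is only the ``particular case'' of Section~\ref{sec:chart}. Globally, the orbit is covered by $N\leq 1+\const\cdot rT$ Darboux charts (possible only because of the length estimate \eqref{eq:length}), the reference trivialization is taken to be Hermitian parallel transport along $\gamma$, the discrepancy with the chart-wise Euclidean frames is bounded chart by chart via (DF3) and the quasi-morphism property \eqref{eq:qm1}--\eqref{eq:qm2} of $\Delta$ (a bounded cost per chart, hence a total error of order $rT$ that is absorbed into $(a-b\cdot r)T$), and finally the difference between the parallel-transport frame and a capping trivialization is identified, via Gauss--Bonnet, with $-2\int_v\sigma$ where $\sigma$ is a Chern--Weil representative of $c_1(TP)$; the hypothesis $c_1(TP)=0$ then enters through $\sigma=d\alpha$, Stokes, and \eqref{eq:length}, giving $\bigl|\int_v\sigma\bigr|\leq\const\cdot rT$. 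To repair your proposal you would need to supply this globalization and winding control (or an equivalent substitute), not merely the fiberwise convexity estimate.
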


Along with this proposition, we also establish a lower bound on
$\Delta(\gamma)$ that holds without the assumption that $c_1(TP)=0$.
Fix a closed 2-form $\sigma$ with $[\sigma]=c_1(TP)$. For instance, we
can take as $\sigma$ the Chern--Weil form representing $c_1$ with
respect to a Hermitian connection on $TP$. In the notation of Section
\ref{sec:integral-curves}, we have

\begin{Proposition}
\labell{prop:growth'}
There exist constants $a>0$ and $c$ and $r_0>0$ such that, whenever
$0<r<r_0$,
\begin{equation}
\labell{eq:growth'}
\Delta_v(\gamma)\geq a\cdot T-c-2\int_v\sigma
\end{equation}
for every contractible $T$-periodic orbit $\gamma$ of $K$ on the level
$K=r^2$ with capping $v$.
\end{Proposition}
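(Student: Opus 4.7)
The plan is to revisit the Sturm--theoretic argument underlying Proposition \ref{prop:growth} and observe that the bound it produces is, at its heart, a bound on the Salamon--Zehnder invariant computed with respect to a trivialization of $TP$ along $\gamma$ built from the local geometry near $M$, rather than from a capping. The hypothesis $c_1(TP)=0$ was used in Proposition \ref{prop:growth} only to identify this invariant with the capping-independent $\Delta(\gamma)$; once the two kinds of trivializations are compared via Chern--Weil theory, inequality \eqref{eq:growth'} follows with a controllable error.

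More precisely, fix a compatible almost complex structure $J$ on $P$ and a unitary connection $\nabla$ on $(TP,J)$, and let $\sigma_{\scriptscriptstyle CW}=-\tfrac{1}{2\pi i}\tr\Omega_\nabla$ be its Chern--Weil $2$-form, so $[\sigma_{\scriptscriptstyle CW}]=c_1(TP)$. For every contractible $T$-periodic orbit $\gamma$ of $K$ on a level $K=r^2$ lying in a fixed tubular neighborhood of $M$, define $\xi_0$ as the symplectic trivialization of $TP$ along $\gamma$ obtained by $\nabla$-parallel transport from a chosen unitary frame at $\gamma(0)$. The argument behind Proposition \ref{prop:growth} uses only the convexity of $K$ in the direction normal to $M$, Proposition \ref{prop:comparison}, and the quasi-morphism property of $\Delta$; it nowhere requires the trivialization along $\gamma$ to come from a capping, so it applies in this setting to produce constants $a_0>0$, $c_0$ and $r_0>0$ for which
$$
\Delta_{\xi_0}(\gamma)\;\geq\; a_0 T - c_0
$$
holds whenever $0<r<r_0$.

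Next, compare $\xi_0$ with the trivialization induced by a capping $v\colon D^2\to P$ of $\gamma$. Any unitary trivialization $\xi_v$ of $v^*TP$ over $D^2$ restricts on $\partial D^2$ to a trivialization of $TP|_\gamma$ differing from $\xi_0$ by a loop in $\U(n)$ whose determinant has winding number $c_1(v^*TP,\xi_0)\in\Z$, the relative first Chern number. By the same mechanism responsible for the sphere-correction rule recalled in Section \ref{sec:integral-curves},
$$
\Delta_v(\gamma)\;=\;\Delta_{\xi_0}(\gamma)\;-\;2\,c_1(v^*TP,\xi_0).
$$
Since $\xi_0$ is $\nabla$-parallel along $\gamma$, the connection matrix in the trivialization $\xi_0$ vanishes on $\partial D^2$; hence Chern--Weil transgression identifies $c_1(v^*TP,\xi_0)=\int_v\sigma_{\scriptscriptstyle CW}$, with no residual boundary term.

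To conclude, write a general closed representative $\sigma=\sigma_{\scriptscriptstyle CW}+d\beta$ of $c_1(TP)$; by Stokes, $\int_v\sigma_{\scriptscriptstyle CW}=\int_v\sigma-\int_\gamma\beta$, and combining the previous steps yields
$$
\Delta_v(\gamma)\;\geq\; a_0 T - c_0 - 2\!\int_v\sigma + 2\!\int_\gamma\beta.
$$
The Hamilton equation together with $K\sim r^2$ and $|dK|=O(r)$ near $M$ forces $\|\dot\gamma\|_\infty=O(r)$, so $\left|\int_\gamma\beta\right|\leq\|\beta\|_\infty\|\dot\gamma\|_\infty T=O(r)T$. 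Shrinking $r_0$ so that this error stays below $a_0 T/2$ then yields \eqref{eq:growth'} with $a=a_0/2$ and $c=c_0$. The main obstacle is the first half of the argument: verifying that the Sturm proof of Proposition \ref{prop:growth} is indeed flexible enough to use the parallel-transport trivialization $\xi_0$ in place of a capping-induced one (for instance by choosing $\nabla$ so as to preserve the splitting $TP|_M=TM\oplus TM^\perp$), and checking that the Chern--Weil transgression produces the claimed identity without a boundary correction. Once these two points are in place, the passage to a general $\sigma$ and the absorption of the $O(r)T$ error are mechanical.
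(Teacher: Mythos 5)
Your skeleton coincides with the paper's own proof: first bound the Salamon--Zehnder invariant computed in a Hermitian--parallel trivialization along $\gamma$, then compare that trivialization with the one induced by the capping, identifying the discrepancy with $-2\int_v\sigma$ for the Chern--Weil form by a Gauss--Bonnet/transgression argument, and finally pass to an arbitrary closed representative of $c_1(TP)$ by Stokes together with the length estimate \eqref{eq:length}. Your second and third steps are essentially right, with one caveat: since the parallel frame is not periodic, the comparison path $B(t)$ is not a loop and the correction is the real-valued total rotation of $\det_\C^2 B$, not an integer relative Chern number; moreover $\Delta$ is only a quasi-morphism, so the relation between $\Delta_v(\gamma)$ and $\Delta_{\xi_0}(\gamma)$ holds up to a dimensional constant rather than as an exact identity --- which is all one needs, and is exactly how the paper argues.

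The genuine gap is the step you yourself flag and then defer. There is no independent ``Sturm proof of Proposition \ref{prop:growth}'' to transplant: in the paper that proposition is deduced from the present one, so the bound $\Delta_{\xi_0}(\gamma)\geq a_0T-c_0$ is precisely what must be proved, and the assertion that the comparison argument ``applies in this setting'' is not justified. In the parallel frame $\xi_0$ the linearized Hamilton equation is \emph{not} generated by the Hessian $d^2K$ alone --- connection/curvature correction terms appear --- so Proposition \ref{prop:comparison} together with \eqref{eq:hessian-lower-bound} cannot be invoked directly, and choosing $\nabla$ to preserve the splitting $TM\oplus TM^\perp$ does not remove these terms along an orbit lying off $M$. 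The paper's proof supplies the missing mechanism: cover $\gamma$ by Darboux charts, in each of which the Euclidean trivialization does make the generator equal to $d^2K$ so the chart-wise Sturm bound holds; compare Euclidean with Hermitian parallel transport over each short arc (property (DF3)), paying a bounded error per chart via the quasi-morphism property \eqref{eq:qm1}; and, crucially, use the length estimate \eqref{eq:length} to bound the number of charts by $1+\const\cdot rT$, so the accumulated errors are of size $\const\cdot rT$ and can be absorbed into the linear term for small $r$, yielding \eqref{eq:growth3}. Without this localization-and-counting argument, the error incurred by switching trivializations is a priori proportional to the number of switches and could grow linearly in $T$ with a coefficient not small in $r$, destroying the bound; so as written the proposal omits the step that carries the actual content of the proposition.
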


\subsection{Proof of Propositions \ref{prop:growth} and \ref{prop:growth'}}
The idea of the proof is that the fiber contribution to $\Delta(\gamma)$ is of
order $T$ and positive, while the base contribution is of
order $r\cdot T$.  It will be convenient to prove a superficially more
general form of \eqref{eq:growth} and \eqref{eq:growth'}. Namely, we
will show that
\begin{equation}
\labell{eq:growth2}
\Delta(\gamma)\geq (a-b\cdot r) T-c
\end{equation}
and
\begin{equation}
\labell{eq:growth2'}
\Delta_v(\gamma)\geq (a-b\cdot r) T-c-2\int_v\sigma
\end{equation}
for some constants $a>0$ and $b$ and $c$, when $r>0$ is small. This
implies \eqref{eq:growth} and \eqref{eq:growth'} with perhaps a
slightly smaller value of $a$.

Throughout the rest of this section we adopt the following notational
convention: in all expressions $\const$ stands for a constant which is
independent of $r$ and $\gamma$ and $T$, once $r$ is sufficiently
small. The value of this constant (immaterial for the proof) is
allowed to vary from one formula to another. A similar convention is
also applied to the constants $a>0$ and $b$ and $c$.

\subsubsection{Particular case: an integral curve in a Darboux chart}
\labell{sec:chart}
Before turning to the general case, let us prove \eqref{eq:growth2}
for an integral curve $\gamma$ of $K$ contained in a Darboux
chart. Let $U\subset M$ be a contractible Darboux chart. The inclusion
$U\hookrightarrow M$ can be extended to a symplectic embedding of an
open set $U\times V\hookrightarrow P$, where $V$ is a ball (centered
at the origin) in a symplectic vector space and $U\times V$ carries
the product symplectic structure. In what follows, we identify
$U\times V$ with its image in $P$ and $U$ with $U\times 0$. Note that
then $T_{(x,0)} (x\times V)$, where $x\in U$, is the symplectic
orthogonal complement $(T_x M)^\omega$ to $T_x M$.

Let $\gamma\colon [0,\, T]\to U\times V$ be an integral curve of the
flow of $K$ on an energy level $K=r^2$. We emphasize that at this
stage we do not require $\gamma$ to be closed, but we do require it to
be entirely contained in $U\times V$. The coordinate system in
$U\times V$ gives rise to a symplectic trivialization of $TP$ along
$\gamma$ and we denote by $\Delta(\gamma)$ the Salamon--Zehnder
invariant of the linearized flow along $\gamma$ with respect to this
trivialization; see Section \ref{sec:integral-curves}.

Next we claim that \emph{\eqref{eq:growth2} holds for such an integral
curve $\gamma$ with all constants independent of $\gamma$}.

Indeed, the linearized flow of $K$ along $\gamma$ is given by the
quadratic Hamiltonian equal to the Hessian $d^2K_{\gamma(t)}$
evaluated with respect to the coordinate system. On the other hand,
since $d^2 K$ is positive definite in the direction normal to the
critical manifold $M$, we have
\begin{equation}
\labell{eq:hessian-lower-bound}
d^2 K_{(x,y)}(X,Y)\geq a \|Y\|^2-b\cdot r \| X\|^2.
\end{equation}
Here $(x,y)\in U\times V$ and $X\in T_xU$ and $Y\in T_yV$ and
$r^2=K(x,y)$.  Note that the constants $a>0$ and $b$ depend on $K$ and
the coordinate chart $U\times V$, but not on $\gamma$ and $r$. The
lower bound \eqref{eq:growth2} (with values of $a$ and $b$ different
from those in \eqref{eq:hessian-lower-bound}) follows now from the
comparison theorem (Proposition \ref{prop:comparison}) and Example
\ref{ex:1}; cf.\ Example \ref{ex:2}.

\subsubsection{Length estimate} Fix an almost complex structure $J$ on $P$
compatible with $\omega$ and such that $M$ is an almost complex
submanifold of $P$, i.e., $J(TM)= TM$. The pair $J$ and $\omega$ gives
rise to a Hermitian metric on the complex vector bundle $TP\to P$. We
denote by $l(\gamma)$ the length of a smooth curve $\gamma$ in $P$
with respect to this metric. Furthermore, there exists a unique
Hermitian connection on $TP$, i.e., a unique connection such that
parallel transport preserves the metric and $J$, and hence,
$\omega$. (Note that, unless $J$ is integrable, this connection is
different from the Levi--Civita connection.)

Let $\gamma\colon [0,\, T]\to P$ be an integral curve of $K$ (not
necessarily closed) on the level $K=r^2$. Then, since $M$ is a
critical manifold of $K$, we have
\begin{equation}
\labell{eq:length}
l(\gamma)\leq \const \cdot r \cdot T.
\end{equation}

As the first application of \eqref{eq:length}, observe that
Proposition \ref{prop:growth} is a consequence of Proposition
\ref{prop:growth'}, i.e., \eqref{eq:growth2'} implies
\eqref{eq:growth2}. Indeed, assume that $c_1(TP)=0$, i.e.,
$\sigma=d\alpha$ for some 1-form $\alpha$ on $P$.  Then, by Stokes'
formula and \eqref{eq:length},
$$
\left|\int_v\sigma\right|=\left|\int_\gamma\alpha\right|\leq
\const\cdot \|\alpha\|_{C^0} \cdot r \cdot T,
$$
which, combined with \eqref{eq:growth2'}, implies \eqref{eq:growth2}.

Before proceeding with a detailed proof of \eqref{eq:growth2'}, let us
briefly outline the argument.  We will cover a closed $T$-periodic
orbit $\gamma$ of $K$ on the level $K=r^2$ by a finite collection of
Darboux charts. The required number $N$ of charts  is of order
$l(\gamma)\sim r\cdot T$. Within every chart, as was proved in Section
\ref{sec:chart}, we have a lower bound on $\Delta$ with respect to the
Euclidean trivialization. Combined, these trivializations can be
viewed as an approximation to a Hermitian-parallel trivialization
$\xi$ along $\gamma\colon [0,\,T]\to P$. (We do not assume that
$\xi(0)=\xi(T)$.) Furthermore, within every chart the discrepancy
between Salamon--Zehnder invariants for the two trivializations
(Euclidean and Hermitian-parallel) is bounded by a constant
independent of $\gamma$ and $r$. As a consequence, the difference
between $\Delta_\xi(\gamma)$ and the total Salamon--Zehnder invariant
for Euclidean chart-wise trivializations is of order $N\sim r\cdot T$,
and we conclude that \eqref{eq:growth2} holds for
$\Delta_\xi(\gamma)$. Finally, by the Gauss--Bonnet theorem, the
effect of replacing $\xi$ by a trivialization associated with a
capping is captured by the integral term in \eqref{eq:growth2'}.

\subsubsection{Auxiliary structure: a Darboux family}
\labell{sec:darboux}
To introduce a Darboux family in $P$ along $M$, let us first set some
notation. Denote by $B_x(\delta)\subset T_x M$ and
$B_x^\perp(\delta^\perp)\subset (T_xM)^\omega$ the balls of radii
$\delta>0$ and $\delta^\perp>0$, respectively, centered at the origin
and equipped with the symplectic structures inherited from $T_xM$ and
$(T_xM)^\omega$.

The first component of the Darboux family is a symplectic
tubular neighborhood $\pi\colon W\to M$. This is an ordinary tubular
neighborhood of $M$, i.e., an identification of a neighborhood $W$ of
$M$ in $P$ with a neighborhood of the zero section in $(TM)^\omega =
TM^\perp$ formed by the fiber-wise balls $B_x^\perp(\delta^\perp)$,
such that the diffeomorphisms between the fibers $V_x=\pi^{-1}(x)$ and
the balls $B_x^\perp(\delta^\perp)$ preserve the symplectic
structure. In particular, we obtain a family of symplectic embeddings
$B_x^\perp(\delta^\perp)\to P$ sending the origin to $x$ and depending
smoothly on $x$. The linearization of the map
$B_x^\perp(\delta^\perp)\to V_x$ at $x$ is the inclusion
$(T_xM)^\omega\hookrightarrow T_xP$.

The second component is a Darboux family in $M$. This is a family of
symplectic embeddings $T_xP\supset B_x(\delta)\to M$ depending
smoothly on $x\in M$, sending the origin $0\in T_x M$ to $x$, and
having the identity linearization at $0\in T_x M$. It is easy to see
that such a Darboux family exists provided that $\delta>0$ is
sufficiently small; see \cite{We84}. We denote the images of this
embedding by $U_x \subset M$.

Now we extend each pair of symplectic embeddings
$B_x^\perp(\delta^\perp)\to P$ and $B_x(\delta)\to M$ to a symplectic
embedding $T_xP\supset B_x(\delta)\times B_x^\perp(\delta^\perp)\to P$,
which is again required to depend smoothly on $x\in M$. The resulting
maps will be called a \emph{Darboux family (in $P$ along $M$)}.

Let $W_x$ stand for the image of the embedding
$B_x(\delta)\times B_x^\perp(\delta^\perp)\to P$. Note that $W_x$ is
naturally symplectomorphic to $U_x\times V_x$ with the split
symplectic structure and the tangent space to $y\times V_x$ is
$(T_yM)^\omega$ for every $y\in U_x$. We also denote by $\pi_x\colon
W_x\to U_x$ the projection to the first factor.  (At this point it is
worth emphasizing that in general $\pi$ and $\pi_x$ do not agree on
$W_x$ although $\pi(V_x)=x=\pi_x(V_x)$.)  Whenever the values of radii
$\delta$ and $\delta^\perp$ are essential, we will use the notation
$U_x(\delta)$ and $V_x(\delta^\perp)$ and $W_x(\delta,\delta^\perp)$
and $W(\delta^\perp)$.  Henceforth, we fix a Darboux family with some
$\delta_0>0$ and $\delta_0^\perp>0$ and consider only Darboux families
obtained by restricting the fixed one to smaller balls.

Let us now state a few simple properties of Darboux families, which
are used in the rest of the proof. These properties require $\delta>0$
and $\delta^\perp>0$ to be sufficiently small. However, once this is
the case, all constants involved are independent of $\delta$ and
$\delta^\perp$.

\begin{itemize}
\item[(DF1)]
The Euclidean metric on $W_x$, arising from the Darboux diffeomorphism
of $W_x$ with an open subset of $T_xP$, is equivalent to the
restriction of the Hermitian metric to $W_x$. Moreover, the constants
involved can be taken independent of $x$.
\end{itemize}

As a consequence of this obvious observation we need not distinguish
between the Hermitian and Euclidean metric on $W_x$ in (DF2) and (DF3)
below.

\begin{itemize}
\item[(DF2)]
The inequality \eqref{eq:hessian-lower-bound} holds in each chart
$W_x$ with some constants $a>0$ and $b$ independent of $x$.

\item[(DF3)]
The difference between Euclidean and Hermitian parallel transports
along any short curve contained in $W_x$ is small for all $x\in
M$. More specifically, denote by $\Pi_\eta^E$ and $\Pi^H_\eta$ the
Euclidean and Hermitian parallel transports $T_{\eta(0)}P\to
T_{\eta(1)}P$ along a curve $\eta\colon [0,\,1]\to W_x$. For any
$\eps>0$ there exists $l_0$, depending on $\eps$ but not on $\delta$
and $\delta^\perp$, such that for any $x\in M$ and any curve $\eta$ in
$W_x$ with $l(\eta)\leq l_0$, the symplectic transformation
$(\Pi^H_\eta)^{-1} \Pi_\eta^E $ lies in the $\eps$-neighborhood of the
identity in $\Sp(T_{\eta(0)}P)$.
\end{itemize}

The property (DF2) is a consequence of the fact that the linearization
of a Darboux map $B_x\times B_x^\perp \to W_x$ at the origin is the
identity map on $T_xP$. Assertion (DF3) is established by the
standard argument.

Now we fix a small $\eps>0$ and $\delta>0$ and $\delta^\perp>0$ such
that (DF1) and (DF2) hold and the distance from $V_x(\delta^\perp/2)$
to the boundary of $W_x=W_x(\delta,\delta^\perp)$ is smaller than
$l_0(\eps)$. This is possible since $l_0$ is independent of $\delta$
and $\delta^\perp$.

\begin{Remark}
\labell{rmk:epsilon}
In fact, $\eps>0$ need not be particularly small. It suffices to
ensure that the value of the Salamon--Zehnder invariant $\Delta$ on
any path in the $\eps$-neighborhood of the identity is bounded by a
constant independent of the path. This is always the case when the
neighborhood is simply connected (and has compact closure).
\end{Remark}

\subsubsection{Proof of \eqref{eq:growth2'}}
Let $r>0$ be so small that the level $K=r^2$ is entirely contained in
the tubular neighborhood $W(\delta^\perp/2)$. Then this level is also
contained in the union of the charts $W_x(\delta,\delta^\perp/2)$ and
hence in the union of the charts $W_x$. Let $\gamma\colon [0,\,T]\to
P$ be a $T$-periodic orbit of $K$ on the level.

Fix a unitary frame $\xi(0)$ in $T_{\gamma(0)}P$ and extend this frame
to a Hermitian trivialization $\xi$ of $TP$ along the path $\gamma$ by
applying Hermitian parallel transport to $\xi(0)$. Note that the
resulting trivialization need not be a genuine trivialization along
$\gamma$ viewed as a closed curve: $\xi(0)\neq \xi(T)$. Nonetheless,
the Salamon--Zehnder invariant $\Delta_\xi(\gamma)$ of $\gamma$ with
respect to $\xi$ is obviously defined.  Namely, recall from Section
\ref{sec:integral-curves} that using $\xi$ we can view the linearized
flow along $\gamma$ as a family $\Phi(t)\in \Sp(T_{\gamma(0)}P)$. Then
$\Delta_\xi(\gamma)=\Delta(\Phi)$.  Our first objective is to show
that \eqref{eq:growth2} holds for $\Delta_\xi(\gamma)$, i.e.,
\begin{equation}
\labell{eq:growth3}
\Delta_\xi(\gamma)\geq (a-b\cdot r) T-c,
\end{equation}
where the constants $a>0$ and $b$ and $c$ are independent of $r$ and
$T$ and $\gamma$.

To this end, consider the partition of $I=[0,\,T]$ into intervals
$I_j=[t_{j-1},\, t_j]$ with $j=1,\ldots, N$ by points
$$
0=t_0<t_1<\ldots<t_{N-1}< t_N=T
$$
such that the length of $\gamma_j=\gamma|_{I_j}$ is exactly
$l_0$. (The last segment $\gamma_N$ may have length smaller than
$l_0$.) It is essential for what follows that, by \eqref{eq:length},
\begin{equation}
\labell{eq:N}
N\leq 1+\const \cdot r\cdot T.
\end{equation}
(Note that, in contrast with the curves $\gamma_j$, the intervals
$I_j$ are not necessarily short: the average length of $I_j$ is
$T/N\sim 1/r$.)  Let $\tau_j$ be the middle point of $I_j$, i.e.,
$\tau_j=(t_{j-1}+t_j)/2$, and $z_j=\gamma(\tau_j)$ and
$x_j=\pi(z_j)$. Due to our choice of $r$, we have $z_j\in
V_{x_j}(\delta^\perp/2)$, and, by the choice of $\delta$ and
$\delta^\perp$, the path $\gamma_j$ lies entirely in $W_{x_j}$.  We
denote by $\Phi|_{I_j}$ the restriction of the family $\Phi(t)$ to
$I_j$. Thus,
\begin{equation}
\labell{eq:sum}
\Delta_\xi(\gamma)=\sum \Delta(\Phi|_{I_j}).
\end{equation}

We bound $\Delta(\Phi|_{I_j})$ from below in a few steps. First,
consider the family $\Phi_j(t)\in \Sp(T_{z_j}P)$ parametrized by $t\in
I_j$ and obtained from the linearized flow of $K$ along $\gamma_j$ by
identifying $T_{\gamma(t)}P$ with $T_{z_j}P$ via Hermitian parallel
transport. It is easy to see that
$$
\Phi_j(t)=\Pi_j\Phi(t)\Phi(\tau_j)^{-1}\Pi_j^{-1},
$$
where $\Pi_j\colon T_{z_0}P\to T_{z_j}P$ is the Hermitian parallel
transport along $\gamma$. By conjugation invariance of $\Delta$ (see
\eqref{eq:conj}) and the quasi-morphism property \eqref{eq:qm2},
\begin{equation}
\labell{eq:Phi2}
\Delta(\Phi|_{I_j})\geq \Delta(\Phi_j)-\const,
\end{equation}
where the constant depends only on $\dim P$. Furthermore, let
$\Psi_j(t)$ be defined similarly to $\Phi_j(t)$, but this time making
use of Euclidean parallel transport in $W_{z_j}$. Clearly,
$$
\Psi_j(t)=A_j(t)\Phi_j(t),
$$
where $A_j(t)\in \Sp(T_{z_j}P)$ measures the difference between the
Hermitian and Euclidean parallel transports along $\gamma_j$. Since
$l(\gamma_j)\leq l_0$, we infer from (DF3) that $A_j(t)$ lies in the
$\eps$-neighborhood of the identity and thus $\Delta(A_j)\leq \const$,
where the constant is independent of $j$ and $\gamma$ and $r$; see
Remark \ref{rmk:epsilon}. Due to the quasi-morphism property
\eqref{eq:qm1} of $\Delta$, we have
\begin{equation}
\labell{eq:Phi3}
\Delta(\Phi_j)\geq \Delta(\Psi_j)-\Delta(A_j)-\const\geq \Delta(\Psi_j)-\const.
\end{equation}
By (DF2), the argument from Section \ref{sec:chart} applies to
$\Psi_j$, and hence
\begin{equation}
\labell{eq:Phi4}
\Delta(\Psi_j)\geq (a-b\cdot r)(t_j-t_{j-1})-\const.
\end{equation}
Combining \eqref{eq:Phi2}--\eqref{eq:Phi4}, we see that
\begin{equation}
\labell{eq:Phi5}
\Delta(\Phi|_{I_j})\geq (a-b\cdot r)(t_j-t_{j-1})-\const,
\end{equation}
where all constants are independent of $\gamma$ and $r$ and the chart,
and $a>0$.

Finally, adding up inequalities \eqref{eq:Phi5} for all $j=1,\ldots,N$
and using \eqref{eq:sum}, we obtain
$$
\Delta(\Phi)\geq (a-b\cdot r)T-\const\cdot N,
$$
which in conjunction with \eqref{eq:N} implies \eqref{eq:growth3}.

To finish the proof of \eqref{eq:growth2'}, fix a capping $v$ of
$\gamma$ and let $\zeta$ be a Hermitian trivialization of $TP$ along
$\gamma$ associated with $v$. Identifying the spaces $T_{\gamma(t)}P$
via $\zeta$, we can view the linearized flow of $K$ along $\gamma$ as
a family $\tPhi(t)\in \Sp(T_{z_0}P)$, $t\in I$. By definition,
$\Delta_v(\gamma)=\Delta(\tPhi)$. Furthermore, without loss of
generality we may assume that $\zeta(0)=\xi(0)$ and then
$$
\tPhi(t)=B(t)\Phi(t).
$$
Here the transformations $B(t)\in \U(T_{z_0}P)$ send the frame
$\zeta(0)$ to the frame $\xi(t)$, where the latter is regarded as a
frame in $T_{z_0}P$ by means of $\zeta$.  Due to again the
quasi-morphism property,
$$
\Delta(\tPhi)\geq \Delta(\Phi)+\Delta(B)-\const.
$$
Since the transformations $B(t)$ are unitary,
$\rho(B(t))=\det_{\C}B(t)$, and $\Delta(B)$ is the ``total rotation''
of $\det_{\C}^2 B$. Hence, by the Gauss--Bonnet theorem,
$$
\Delta(B)=-2\int_v\sigma,
$$
where $\sigma$ is the Chern--Weil form representing $c_1(TP)$.
Combined with \eqref{eq:growth3}, this concludes the proof of
\eqref{eq:growth2'} and the proof of Propositions \ref{prop:growth}
and \ref{prop:growth'}.

\section{Particular case: $P$ is geometrically bounded and symplectically
aspherical}
\labell{sec:proof-sa}

To set the stage for the proof of the general case, in this section we
establish Theorem \ref{thm:main} under the additional assumptions that
$P$ is geometrically bounded and symplectically aspherical (i.e.,
$\omega|_{\pi_2(P)}=0=c_1(TP)|_{\pi_2(P)}$). We refer the reader to,
e.g., \cite{AL,CGK,GG} for the definition and a detailed discussion of
geometrically bounded manifolds. Here we only mention that among such
manifolds are all closed symplectic manifolds as well as their
covering spaces, manifolds that are convex at infinity (e.g., $\R^{2n}$,
cotangent bundles, and symplectic Stein manifolds) and also twisted
cotangent bundles.

\subsection{Conventions}
\labell{sec:conventions}

Throughout the rest of the paper we adopt the following conventions
and notation.  Let $\gamma\colon S^1_T\to P$, where $S^1_T=\R/T\Z$, be
a contractible loop with capping $v$.  The action of a $T$-periodic
Hamiltonian $H$ on $(\gamma,v)$ is defined by
$$
A_H(\gamma,v)=-\int_v\omega+\int_{S^1_T} H_t(\gamma(t))\,dt,
$$
where $H_t=H(t,\cdot)$. When $\omega|_{\pi_2(P)}=0$, the action
$A_H(\gamma,v)$ is independent of the choice of $v$ and we will use
the notation $A_H(\gamma)$.

All Hamiltonians considered below are assumed to be one-periodic in
time or autonomous. In the former case, we always require $T$ to be an
integer; in the latter case, $T$ can be an arbitrary real number.

The least action principle asserts that the critical points of $A_H$
on the space of all (capped) contractible loops $\gamma\colon S^1_T\to
P$ are exactly (capped) contractible $T$-periodic orbits of the
time-dependent Hamiltonian flow $\varphi_H^t$ of $H$. The Hamiltonian
vector field $X_H$ of $H$, generating this flow, is given by
$i_{X_H}\omega=-dH$.  The Salamon--Zehnder invariant
$\Delta_v(\gamma)$ of a $T$-periodic orbit $\gamma$ with capping $v$
and the Conley--Zehnder index $\MUCZ(\gamma,v)$, when $\gamma$ is
non-degenerate, are defined as in Section~\ref{sec:integral-curves}
using the linearized flow $d\varphi_H^t$ and a trivialization
associated with $v$.

At this point it is important to emphasize that our present
conventions differ from the conventions from, e.g., \cite{Sa},
utilized implicitly in Sections \ref{sec:SZ-Hamiltonian} and
\ref{sec:growth}. For instance, the Hamiltonian vector field $X_H$
defined as above is negative of the Hamiltonian vector field in
\cite{Sa}. As a consequence of this sign change, the values of
$\Delta_v(\gamma)$ and $\MUCZ(\gamma,v)$ also change sign.  (In
other words, from now on the Salamon--Zehnder invariant of a linear
flow with positive definite Hamiltonian is negative; equivalently,
$\MUCZ$ is normalized so that $\MUCZ(\gamma)=n$ when $\gamma$ is a
non-degenerate maximum of an autonomous Hamiltonian with small
Hessian.)  In particular, the value of $\Delta$ in
Propositions \ref{eq:growth} and \ref{eq:growth'} must in what follows
be replaced by $-\Delta$. This change of normalization should not lead
to confusion, for the correct sign is always clear from
the context, and it will enable us to conveniently eliminate a number
of negative signs in the statements of intermediate results.

\subsection{Floer homological counterpart}
\labell{sec:GG1}
The proof of the theorem uses two major ingredients. One is the Sturm
comparison theorem for $K$ proved in Section \ref{sec:growth}. The
other is a calculation of the filtered Floer homology for a suitably
reparametrized flow of $K$.

Let, as in Section \ref{sec:growth}, $K\colon P\to \R$ be an
autonomous Hamiltonian attaining its Morse-Bott non-degenerate minimum
$K=0$ along a closed symplectic submanifold $M\subset P$.  Pick
sufficiently small $r>0$ and $\eps>0$ with, say, $\eps<\eps_0=r^2/10$.
Let $H\colon [r^2-\eps,\, r^2+\eps]\to [0,\infty)$ be a smooth
decreasing function such that
\begin{itemize}
\item $H\equiv \max H$ near $r^2-\eps$ and $H\equiv 0$ near $r^2+\eps$.
\end{itemize}
Consider now the Hamiltonian equal to $H\circ K$ within the shell
bounded by the levels $K=r^2-\eps$ and $K= r^2+\eps$ and extended to
the entire manifold $P$ as a locally constant function. Abusing
notation, we denote the resulting Hamiltonian by $H$ again. Clearly,
$\min H=0$ on $P$ and the maximum, $\max H$, is attained on the entire
domain $K\leq r^2-\eps$.

\begin{Proposition}[\cite{GG}]
\labell{prop:floer}

Assume that $P$ is geometrically bounded and symplectically aspherical
and that $r>0$ is sufficiently small. Then, once $\max H\geq C(r)$
where $C(r)\to 0$ as $r\to 0$, we have
$$
\HF^{(a,\,b)}_{n_0}(H)\neq 0
$$
for $n_0=1+(\codim M-\dim M)/2$ and some interval $(a,\,b)$
with $a>\max H$ and $b<\max H+C(r)$.
\end{Proposition}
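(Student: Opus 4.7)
My plan is to reduce Proposition~\ref{prop:floer} to a Morse--Bott Floer-homological computation in a symplectic tubular neighborhood of $M$, following the scheme of \cite{GG}. Since $H=H\circ K$ is compactly supported near $M$ and $P$ is both geometrically bounded and symplectically aspherical, $\HF^{(a,b)}(H)$ is well defined and depends only on the germ of $H$ along its support. The symplectic tubular neighborhood theorem identifies a neighborhood of $M$ with a neighborhood of the zero section in the symplectic normal bundle $E\to M$, so I would carry out the computation in this local model (the case of trivial normal bundle, where the neighborhood is $M\times D^{2k}$ with $2k=\codim M$, captures the essential features; the general case is a routine adaptation).

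The next step is to inventory the orbits contributing to the window $(a,b)$. On each level $K=r_0^2$ inside the shell, the flow of $H\circ K$ rotates points within their normal fibers with angular speed $|H'(r_0^2)|$. Once $\max H$ exceeds the threshold $C(r)$, the slope $|H'|$ is large enough that some level inside the shell carries once-winding closed orbits of period $1$; these form a Morse--Bott family naturally parametrized by $M$. A direct action computation, $A=-\int_v\omega+\int_0^1 H\,dt$ with $v$ a normal-disc capping, gives action values in an interval $(\max H,\ \max H+C'(r))$ with $C'(r)\to 0$ as $r\to 0$. Provided $\max H-C(r)$ is sufficiently small, this is the only family contributing to the window: constants on $\{K\le r^2-\eps\}$ have action exactly $\max H$, constants outside the shell have action $0$, and multiply-winding orbits fall outside the window for small $r$.

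The final step extracts a non-trivial class using the Morse--Bott formalism of Pozniak. Perturbing $H$ by a small Morse function $f$ on $M$ resolves the family into $|\operatorname{Crit}(f)|$ non-degenerate orbits, with Conley--Zehnder indices
\[
\MUCZ(\gamma_p)\;=\;\mu_{\mathrm{norm}}+\operatorname{ind}_p(f)-\tfrac{1}{2}\dim M,
\]
where $\mu_{\mathrm{norm}}$ is the contribution from a single normal rotation. A linear-model computation (consistent with the convention $\MUCZ(\max)=n$ of Section~\ref{sec:conventions}, and most easily verified by specializing to $M=\mathrm{pt}$, where the statement becomes the classical bump-Hamiltonian computation yielding $\mu_{\mathrm{norm}}=n+1$) gives $\mu_{\mathrm{norm}}=1+\tfrac{1}{2}\codim M-\dim M$, so that the orbit at the maximum of $f$, where $\operatorname{ind}_p(f)=\dim M$, lies in Floer degree
\[
n_0\;=\;1+\tfrac{1}{2}(\codim M-\dim M).
\]
This orbit represents the Morse--Bott image of $[M]\in H_{\dim M}(M)$, which is non-zero (with $\Z_2$ coefficients), and hence $\HF^{(a,b)}_{n_0}(H)\neq 0$.

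The main obstacle is the index bookkeeping: pinning down $\mu_{\mathrm{norm}}$ with all signs correct under this paper's normalization of $\MUCZ$, and verifying that no cancellation from other generators in $(a,b)$ can kill the class $[M]$. The former is a direct linear-algebra calculation anchored by the $M=\mathrm{pt}$ check; the latter is arranged by shrinking $\eps$ and $\max H-C(r)$ so that the window isolates precisely the first Morse--Bott family.
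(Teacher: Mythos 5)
There is a genuine gap at the center of your argument: the inventory of one-periodic orbits of $H$. You assert that on each level $K=r_0^2$ the flow of $H\circ K$ ``rotates points within their normal fibers with angular speed $|H'(r_0^2)|$,'' so that once $\max H\geq C(r)$ some level carries a clean Morse--Bott family of once-winding $1$-periodic orbits parametrized by $M$, whose local Floer homology you then compute. But $X_{H\circ K}=H'(K)\,X_K$, so the orbits of $H$ are reparametrized orbits of $K$, and the dynamics of $K$ on the levels $K=\const$ near $M$ is precisely the unknown: establishing that these levels carry \emph{any} periodic orbit is the content of Theorem \ref{thm:main}. The fiberwise-rotation picture is valid only for Hamiltonians that are functions of the fiberwise Hermitian norm $\rho$ (then the orbits are Hopf circles); for a general Morse--Bott $K$ the normal Hessian $d^2_MK$ is an arbitrary positive fiberwise quadratic form (not Hermitian), its fiber flow is in general not periodic, and there is coupling with the base, so the $1$-periodic orbits of $H$ in the action window need not form a Morse--Bott manifold at all -- let alone one parametrized by $M$. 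Consequently the Po\'zniak computation you invoke cannot be applied to $H$ itself, and the threshold ``slope $\Rightarrow$ existence of once-winding orbits'' has no justification for $H\circ K$.

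This is exactly the difficulty the quoted argument from \cite{GG} (reproduced in this paper in the proof of Proposition \ref{prop:floer2}) is designed to circumvent: one squeezes $H$ between two functions $F^-\leq H\leq F^+$ of $\rho$ alone, whose orbits \emph{are} Hopf circles filling Morse--Bott levels diffeomorphic to the unit normal sphere bundle $\Sigma$ (note: $\Sigma$, of dimension $\dim P-1$, not $M$; the degree-$n_0$ generator comes from the bottom of the index interval, i.e.\ $H_0(\Sigma;\Z_2)$, rather than from $[M]$, so your index bookkeeping would also need revision). One computes $\HF^{(a,\,b)}_{n_0}(F^\pm)\cong\Z_2$ via local Floer homology and action/index estimates, and then shows that the monotone homotopy map $\HF^{(a,\,b)}_{n_0}(F^+)\to\HF^{(a,\,b)}_{n_0}(F^-)$ is an isomorphism; since this map factors through $\HF^{(a,\,b)}_{n_0}(H)$, the latter is nonzero. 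Your proposal contains no substitute for this monotonicity/squeezing step, and without it the claim about $\HF^{(a,\,b)}_{n_0}(H)$ does not follow.
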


Here $\HF^{(a,b)}_*(H)$ stands for the filtered Floer homology of $H$
for the interval $(a,b)$. We refer the reader to Floer's papers
\cite{F:Morse,F:grad,F:c-l,F:witten,F:hs}, to, e.g.,
\cite{BPS,HS,SZ,schwarz}, or to \cite{HZ,McSa,Sa} for further
references and introductory accounts of the construction of
(Hamiltonian) Floer and Floer--Novikov homology. Filtered Floer
homology for geometrically bounded manifolds are discussed in detail
in, e.g., \cite{CGK,GG,Gu} and \cite{Gi:coiso} with the above
conventions. Finally, the construction of filtered Floer--Novikov
homology for open manifolds, utilized in Section \ref{sec:general}, is
briefly reviewed in Section~\ref{sec:FN}.

\subsection{Proof of Theorem \ref{thm:main}: a particular case}
Now we are in a position to prove Theorem \ref{thm:main} in the
particular case where $P$ is geometrically bounded and symplectically
aspherical. First observe that $H$ has a non-trivial contractible
one-periodic orbit $\gamma$ with
\begin{equation}
\labell{eq:index1}
1-\dim M = n_0-\dim P/2 \leq \Delta(\gamma)\leq n_0+\dim P/2=1+\codim M.
\end{equation}
Indeed, let $\tH\colon S^1\times P\to \R$ be a compactly supported,
$C^1$-close to $H$, non-degenerate perturbation of $H$. By Proposition
\ref{prop:floer}, $\tH$ has a non-degenerate contractible orbit
$\tgamma$ with action in the interval $(a,\,b)$ and Conley--Zehnder
index $n_0$.  By \eqref{eq:CZ-SZ},
$$
n_0-\dim P/2 \leq \Delta(\tgamma)\leq n_0+\dim P/2.
$$
Passing to the limit as $\tH\to H$ and setting $\gamma=\lim \tgamma$,
we conclude that the same is true for $\Delta(\gamma)$ by continuity
of $\Delta$. The orbit $\gamma$ is non-trivial since the trivial
orbits of $H$ have action either zero or $\max H$ while $A_H(\gamma)>
a >\max H$. As a consequence, $\gamma$ lies on a level of $H$ with
$r^2-\eps<K<r^2+\eps$.

Since $H$ is a function of $K$, we may also view $\gamma$, keeping the
same notation for the orbit, as a $T$-periodic orbit of $K$. Note that
$H$ is a decreasing function of $K$, but otherwise the requirements of
Lemma \ref{lemma:change} are met.  Hence, $\Delta(\gamma,
K)=-\Delta(\gamma)$, where $\Delta(\gamma)=\Delta(\gamma,H)$. Thus
\eqref{eq:index1} turns into
$$
1-\dim M \leq -\Delta(\gamma,K)\leq 1+\codim M.
$$
On the other hand, up to a sign, inequality \eqref{eq:growth} of
Proposition \ref{eq:growth} still holds for $\gamma$ with constants
$a>0$ and $c$ independent of $H$ and $r$ and $\eps>0$:
$$
-\Delta(\gamma,K)\geq a\cdot T-c.
$$
(The negative sign is a result of the convention change.) Hence, we
have an \emph{a priori} bound on $T$:
$$
T\leq T_0=(1+c+\codim M)/a.
$$
Passing to the limit as $\eps\to 0$, we see that the $T$-periodic
orbits $\gamma$ of $K$ converge, by the Arzela-Ascoli theorem, to a
periodic orbit of $K$ on the level $K=r^2$ with period bounded from
above by $T_0$. This completes the proof of Theorem \ref{thm:main} in
the particular case.

\begin{Remark}
In the proof above, the arguments from \cite{CGK,Gu,Ke} could also be
used in place of the result from \cite{GG}. The only reason for
utilizing that particular result is that its proof affords an easy,
essentially word-for-word, extension to the general case.
\end{Remark}

\subsection{Proof of Proposition \ref{prop:inf-many}}
\labell{sec:pf-inf-many}

In the setting of the proposition, fix a trivialization of the normal
bundle $(TM)^\omega$. Then, every energy level $K=r^2$ also inherits a
trivialization via its identification with the unit sphere bundle in
$(TM)^\omega$. When $r>0$ is small, the field of directions
$\ker(\omega|_{K=r^2})$ is transverse to the horizontal
distribution. Hence, fixing a horizontal section $\Gamma$ of the
$S^1$-bundle $\{K=r^2\}\to M$, we obtain the Poincar\'e return map
$\varphi\colon \Gamma\to\Gamma$.  Clearly, periodic points of
$\varphi$ are in one-to-one correspondence with periodic orbits of the
Hamiltonian flow on $K=r^2$.  The restriction $\omega|_\Gamma$ is a
symplectic form preserved by $\varphi$. Furthermore, as is easy to
see, $\Gamma$ is symplectomorphic to $M$. Thus, we can view $\varphi$
as a symplectomorphism $M\to M$. It is not hard to show that $\varphi$
is in fact a Hamiltonian diffeomorphism; see \cite{Gi:FA} and also
\cite{Ar2}.  The proposition now follows from the Conley conjecture
proved in \cite{Gi:conley}.

\section{Filtered Floer--Novikov homology for open manifolds}
\labell{sec:FN}

In this section, we describe a version of Floer (or Floer--Novikov)
homology which is suitable for extending Proposition \ref{prop:floer}
beyond the class of symplectically aspherical, geometrically bounded
manifolds.  We will focus on the case where $P$ is open, but not
necessarily geometrically bounded, for this is the setting most
relevant to the proof of Theorem \ref{thm:main}. Furthermore, we also
assume throughout the construction that $P$ is spherically rational,
i.e., $\left<\omega,\pi_2(P)\right>=\lambda_0\Z$ for some $\lambda_0>
0$ or $\omega|_{\pi_2(P)}=0$. In the latter case, it is convenient to set
$\lambda_0=\infty$ and $\lambda_0\Z=\{0\}$.

\subsection{Definitions}
Fix an open set $W\subset P$ with compact closure. Let $H\colon
S^1\times P\to \R$ be a one-periodic Hamiltonian on $P$, supported in
$W$ (or rather in $S^1\times W$). Two cappings $v_0$ and $v_1$ of the
same one-periodic orbit $\gamma$ of $H$ are said to be equivalent if
$\left<\omega,w\right>=0=\left<c_1(TP),w\right>$, where $w\in
\pi_2(P)$ is the sphere obtained by attaching $v_1$ to $v_0$ along
$\gamma$.  (For instance, when $P$ is symplectically aspherical any
two cappings are equivalent.) The value of the action functional
$A_H(\gamma,v)$ and the Conley--Zehnder index $\MUCZ(\gamma,v)$ and
the Salamon--Zehnder invariant $\Delta_v(\gamma)$ are entirely
determined by the equivalence class of $v$ and from now on we do not
distinguish equivalent cappings.

Assume that all one-periodic orbits of $H$ with action in
$(0,\,\lambda_0)$ are non-degenerate and that there are only finitely
many such orbits. This is a $C^\infty$-generic condition in $H$, cf.\
\cite{FHS,HS}. (However, since $P$ is open and $H$ is compactly
supported, the flow necessarily has trivial periodic orbits. Such
orbits are degenerate and have action in $\lambda_0\Z$.) For
$0<a<b<\lambda_0$ denote by $\CF^{(a,\,b)}_k(H)$ the vector space
freely generated over $\Z_2$ by (capped) orbits $x=(\gamma, v)$ with
$\MUCZ(x)=k$ and $a<A_H(x)<b$. Note that each vector space
$\CF^{(a,\,b)}_k(H)$ has finite dimension, for changing the
equivalence class of a capping by attaching a sphere $w$ to it shifts
the action and the index by $\left<\omega,w\right>\in \lambda_0\Z$
and, respectively, $2\left<c_1(TP),w\right>$.

Fix an almost complex structure $J$ (compatible with $\omega$) on $P$,
which we allow to be time-dependent within $W$. We define the Floer
differential $\p\colon \CF^{(a,\,b)}_k(H)\to \CF^{(a,\,b)}_{k-1}(H)$
by the standard formula:
\begin{equation}
\labell{eq:d-floer}
\p x=\sum \#[\widehat{\CM}(x,y)]\cdot y,
\end{equation}
where $x$ is a capped orbit $(\gamma^-,v^-)$, the sum is taken over
all $y=(\gamma^+,v^+)$ with index $k-1$ and action in $(a,\,b)$, and
$\#[\widehat{\CM}(x,y)]$ is the number ($\!\!\!\!\mod 2$) of points in
the moduli space $\widehat{\CM}(x,y)$ of Floer anti-gradient
connecting trajectories from $x$ to $y$. Let us recall the definition
of this moduli space.

Let $(s,t)$ be the coordinates on $\R\times S^1$. Denote by $\CM(x,y)$
the space formed by solutions $u\colon \R\times S^1\to P$ of Floer's
equation
\begin{equation}
\label{eq:floer}
\frac{\p u}{\p s}+ J_t(u) \frac{\p u}{\p t}=-\nabla H_t(u)
\end{equation}
which are asymptotic to $\gamma^\pm$ at $\pm\infty$, i.e., $u(s,t)\to
\gamma^\pm(t)$ point-wise as $s\to \pm\infty$, and such that the
capping $v^-$ is equivalent to the one obtained by attaching $u$ to
$v^+$ along $\gamma^+$. The space $\CM(x,y)$ carries an $\R$-action
given by shifts of $s$.  We set $\widehat{\CM}(x,y)=\CM(x,y)/\R$.  For
a generic Hamiltonian $H$ supported in $W$, the space $\CM(x,y)$,
equipped with the topology of uniform $C^\infty$-convergence on
compact sets, is a smooth manifold of dimension $\MUCZ(x)-\MUCZ(y)$;
\cite{FHS,HS}.  Furthermore, the $\R$-action on $\CM(x,y)$ is
non-trivial unless $\CM(x,y)$ is comprised entirely of one solution
$u(s,t)$ independent of $s$, and thus $\gamma^+=u=\gamma^-$ and $\dim
\CM(x,y)=0$. It follows that $\widehat{\CM}(x,y)$ is a discreet set
when $\MUCZ(x)=\MUCZ(y)+1$.

As is well known, one cannot expect to have $\p^2=0$ unless
$P$ satisfies some additional topological and geometrical
requirements. Moreover, the moduli spaces $\widehat{\CM}(x,y)$ in
\eqref{eq:d-floer} need not in general be finite and once one of these
sets is infinite $\p$ is not even defined.  The next lemma shows,
however, that these problems do not arise if the action interval
$(a,\,b)$ is sufficiently short.

\begin{Lemma}
\labell{lemma:floer}

There exists a constant $h=h(W,J)>0$, depending only on $W$ and $J$
but not on $H$, such that once $b-a<h$, the zero dimensional moduli
spaces $\widehat{\CM}(x,y)$ in \eqref{eq:d-floer} are finite and
$\p^2=0$.
\end{Lemma}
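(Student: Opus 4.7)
The strategy is to combine the standard Floer energy identity with two ingredients available here --- spherical rationality, which gates sphere bubbling, and a local monotonicity estimate for $J$-holomorphic curves on a compact neighborhood of $\overline{W}$, which confines Floer trajectories --- and then take $h$ to be the minimum of the two gates. The genuinely new obstacle is the $C^0$-confinement: $P$ is open and not assumed geometrically bounded, so there is no a priori reason Floer trajectories should remain in a compact set, and global monotonicity is unavailable. Everything else is standard, but rests on first producing this $C^0$-bound.

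First I would record the energy identity
$$
E(u) \;=\; \int_{\R\times S^1}\!\bigl|\tfrac{\p u}{\p s}\bigr|^2\,ds\,dt \;=\; A_H(x)-A_H(y) \;<\; b-a
$$
for every $u\in\CM(x,y)$ with $x,y$ of action in $(a,b)$, which gives a uniform energy bound. Spherical rationality forces every non-constant $w\in\pi_2(P)$ to satisfy $|\langle\omega,w\rangle|\geq\lambda_0$, so as soon as $b-a<\lambda_0$ no sphere can bubble off along a sequence in $\CM(x,y)$, since the energy available is strictly less than the minimal spherical area.

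For $C^0$-confinement I would pick relatively compact open sets $V_1,V_2$ with $\overline{W}\subset V_1$ and $\overline{V_1}\subset V_2$. Applying the monotonicity inequality to $J$ on the compact set $\overline{V_2}$ produces a constant $\hbar=\hbar(W,J)>0$ such that any non-constant $J$-holomorphic curve in $\overline{V_2}$ whose image meets both $\p V_1$ and $\p V_2$ has area at least $\hbar$. If a trajectory $u\in\CM(x,y)$ exited $V_2$, then, since $H$ vanishes outside $W\subset V_1$, the portion of $u$ mapping into $\overline{V_2}\setminus V_1$ would be $J$-holomorphic and would cross the collar from $\p V_1$ to $\p V_2$, forcing $E(u)\geq\hbar$. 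Setting $h:=\min\{\lambda_0,\hbar\}$ --- a function of $W$ and $J$ alone --- and requiring $b-a<h$ therefore keeps every trajectory in $\overline{V_2}$ and rules out bubbling.

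With a uniform energy bound, uniform $C^0$-bound and no bubbles, standard Gromov--Floer compactness yields a compactification of each $\CM(x,y)$ by broken trajectories through intermediate capped orbits of action in $(a,b)$. When $\MUCZ(x)-\MUCZ(y)=1$, additivity of virtual dimensions under breaking (each $\widehat{\CM}$-factor has non-negative dimension summing to $0$) forbids any genuine breaking, so $\widehat{\CM}(x,y)$ is a compact zero-manifold, hence finite, and the differential in \eqref{eq:d-floer} is well defined. When $\MUCZ(x)-\MUCZ(y)=2$, the compactified $\widehat{\CM}(x,y)$ is a compact $1$-manifold whose boundary strata are precisely the once-broken pairs through capped orbits $z$ with $\MUCZ(z)=\MUCZ(y)+1$ and $A_H(z)\in(a,b)$; these are exactly the pairs counted by the matrix coefficient of $\p^2$ from $x$ to $y$, so they pair up and $\p^2=0$ over $\Z_2$.
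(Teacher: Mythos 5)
Your proposal is correct and follows essentially the same route as the paper: a uniform energy bound from the action interval, spherical rationality to gate bubbling at $\lambda_0$, and a Sikorav-type local monotonicity estimate on a compact collar around $W$ (the paper uses the single collar $\bar{V}\ssminus W$ where you use $\overline{V_2}\setminus V_1$, a cosmetic difference) to confine trajectories to a compact set, with $h$ the minimum of the two thresholds. The concluding compactness and gluing arguments are exactly the standard ones the paper invokes.
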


\begin{Remark}
As will be clear from the proof of Lemma \ref{lemma:floer}, the
constant $h>0$ can be chosen to be the same for all almost complex
structures that are sufficiently $C^1$-close to $J$. This is essential
to ensure generic regularity for Floer continuation maps.
\end{Remark}

The lemma is nearly obvious. Two phenomena can interfere with
compactness of $\widehat{\CM}(x,y)$ or cause $\p^2$ not to be zero:
bubbling-off and the existence of bounded energy sequences of Floer
connecting trajectories going to infinity in $P$.  However, since $H$
is supported in $W$, every Floer connecting trajectory is a
holomorphic curve outside $W$.  Leaving a compact neighborhood
$\bar{V}$ of $W$ requires such a curve to have energy exceeding some
$h_1(V,J)>0$. Furthermore, since $P$ is spherically rational,
bubbling-off requires energy $\lambda_0$ or greater. (Alternatively,
since Floer trajectories with energy smaller that $h_1$ are confined
to a compact set $\bar{V}$, one can invoke Gromov's compactness
theorem rather than rationality of $P$.)  Summarizing, we conclude
that neither of these phenomena can occur when
$b-a<h=\min\{h_1,\lambda_0\}$. For the sake of completeness, we
provide a more detailed argument.

\begin{proof}
Fix an open set $V\supset \bar{W}$ with compact closure. 
Without loss of generality we may assume that $\bar{V}$ and $\bar{W}$
are smooth connected manifolds with boundary. Then $Y=\bar{V}\ssminus
W$ is a smooth compact domain whose boundary has two components: $\p
\bar{W}$ and $\p \bar{V}$. There exists a constant $h_1=h_1(W,V,J)>0$
such that for every holomorphic curve $v$ in $Y$ whose boundary is
contained in $\p Y$ and meets both of the components of $\p Y$ we have
$$
E(v):=\int_v \omega> h_1.
$$

This fact is an immediate consequence of a result of Sikorav,
\cite[p.\ 179]{AL}.  Namely, consider a holomorphic curve through
$z\in P$ with boundary on the $R$-sphere centered at $z$. Then
according to this result, there exist constants $C$ and $R_0>0$
(depending only on $z$) such that the area of the holomorphic curve is
greater than $CR^2$ whenever $0<R<R_0$. Moreover, it is clear that $C$
and $R_0$ can be taken independent of $z$ as long as $z$ varies within
a fixed compact set. Let now $S$ be a closed hypersurface in $Y$
separating the two boundary components of $\p Y$. Then, any
holomorphic curve $v$ as above passes through a point $z\in S$, and
thus through a ball of radius $R>0$, where $R$ depends only on $S$,
centered at $z$ and contained in $Y$. Taking $R>0$ sufficiently small
and applying Sikorav's result, we conclude that $E(v)> CR^2=:h_1$.

Assume now that $b-a<h:=\min\{h_1,\lambda_0\}$. Then a Floer
trajectory $u$ connecting $x$ to $y$ with $0<a<A_H(y)\leq
A_H(x)<b<\lambda_0$ is necessarily contained in $V$.  Indeed, denote
by $v$ a part of $u$ lying in $Y=\bar{V}\ssminus W$. Clearly, $v$ is a
holomorphic curve (with boundary on $\p Y$) since $H$ is supported in
$W$, and
\begin{equation}
\labell{eq:energy}
E(v)\leq E(u):=
\int_{-\infty}^\infty \left\|\frac{\p u}{\p s}\right\|_{L^2(S^1)}^2\,ds
=A_H(x)-A_H(y)<h_1.
\end{equation}
Furthermore, the boundary of $v$ meets $\p \bar{W}$, for both $x$ and
$y$ are in $W$, and is entirely contained in $\p \bar{W}$ since
$E(v)<h_1$. Hence, $u$ takes values in $V$. Thus we have shown that
all connecting trajectories $u$ belong to a compact set $\bar{V}$.

Since we also have $b-a<\lambda_0$, bubbling-off is precluded by an
energy estimate similar to \eqref{eq:energy}. (Just a slightly more
elaborate argument utilizing Gromov's compactness theorem shows that
bubbling-off cannot occur whenever $b-a<h_2$ for some $h_2>0$, even if
$P$ is not spherically rational.) As a consequence, we infer by what
have now become standard arguments (see, e.g., \cite{HS,Sa}) that
$\widehat{\CM}(x,y)$ is compact when $\MUCZ(x)=\MUCZ(y)+1$, and hence
finite, and $\p^2=0$.
\end{proof}

From now on, when working with the Floer homology
$\HF^{(a,\,b)}_*(H)$, we will always assume that $0<a<b<\lambda_0$ and
$b-a<h$ and that $a$ and $b$ are outside the action spectrum $\CS(H)$
of $H$. (Note that, since $P$ is spherically rational and $H$ is
compactly supported, $\CS(H)$ is closed and has zero measure; see,
e.g., \cite{HZ,schwarz}.)

\subsection{Properties}
The Floer homology spaces defined above have the standard properties
of filtered Floer homology of compactly supported Hamiltonians on
geometrically bounded manifolds; see \cite{CGK,GG,Gi:coiso,Gu}. Here
we recall only three of these properties that are explicitly used in
the proof of the main theorem:

\begin{itemize}

\item For any three points $a<b<c$, where $0<a<c<\lambda_0$ and
$c-a<h$, we have the long exact sequence
$$
\ldots\to \HF^{(a,\,b)}_*(H)\to\HF^{(a,\,c)}_*(H)\to
\HF^{(b,\,c)}_*(H)\to\ldots\, .
$$
\item A monotone decreasing homotopy from a Hamiltonian $H^+$ to a
Hamiltonian $H^-$ gives rise to homomorphisms
$$
\Psi_{H^+H^-}\colon \HF^{(a,\,b)}_*(H^+)\to \HF^{(a,\,b)}_*(H^-),
$$
which are independent of the homotopy and commute with the long exact
sequence homomorphisms.

\item Let $H^s$ be a family of Hamiltonians continuously parametrized
by $s\in [0,\,1]$ and let $a(s)<b(s)$ be two continuous functions of
$s$ such that $a(s)<b(s)$ and all intervals $(a(s),\,b(s))$ satisfy
the above requirements. Assume also that $a(s)$ and $b(s)$ are outside
$\CS(H^s)$.  Then the groups $\HF^{(a(s),\,b(s))}_*(H^s)$ are
isomorphic for all $s$. As a consequence, by continuity, we have
$\HF^{(a,\,b)}_*(H)$ defined even when the orbits of $H$ are
degenerate.
\end{itemize}

The proofs of these properties and the constructions involved are
identical to those for geometrically bounded symplectically aspherical
manifolds (see \cite{CGK,Gi:coiso,GG,Ke}) which in turn follow closely
the proofs for closed or convex manifolds (see, e.g.,
\cite{BPS,FH,FS,Gi:conley, McSa,Sa,schwarz,Vi:fun} and references
therein).

\begin{Remark}
\labell{rmk:module}

It is worth pointing that, in contrast with the total Floer--Novikov
homology on a closed manifold, the filtered homology spaces
$\HF^{(a,\,b)}_*(H)$ are not modules over the Novikov ring $\Lambda$
of $P$. (See, e.g., \cite{FHS,Sa,McSa} for the definition of
$\Lambda$.) However, a part of the $\Lambda$-module structure is
retained by the filtered homology. Namely, note first that the
requirement that $0<a<b<\lambda_0$ can be replaced by a less
restrictive condition that $(a,\,b)$ contains no points of
$\lambda_0\Z$. Then attaching a sphere $w\in\pi_2(P)$ simultaneously
to all cappings gives rise to an isomorphism $\CF^{(a,\,b)}_*(H)\to
\CF^{(a+\alpha,\,b+\alpha)}_{*+\mu}(H)$ of Floer complexes, and hence
of Floer homology, where $\alpha=\left<\omega,w\right>$ and
$\mu=2\left<c_1(TP),w\right>$.  This is an analogue of the action of
the generators of $\Lambda$ in the total Floer complex (or homology)
of $H$.
\end{Remark}

\begin{Remark}
\labell{rmk:other-cases}

If $P$ is closed or geometrically bounded, some of the restrictions
made in the construction of the filtered Floer homology can be
relaxed. Namely, when $P$ is closed and $W=P$, no homological
conditions on $P$ or restrictions on the action interval $(a,\,b)$ are
needed, provided that $(a,\,b)$ is sufficiently short. (This readily
follows from Gromov's compactness theorem, which guarantees that no
bubbling-off can occur on Floer connecting trajectories with small
energy.) When $P$ is open and geometrically bounded, there is no need
to fix an open set $W$ with compact closure and the constant $h$ is
independent of the support of $H$. (This is a consequence of Sikorav's
version of the Gromov compactness theorem; see \cite{AL}.) However,
the assumptions that $P$ is spherically rational and that
$0<a<b<\lambda_0$, or at least that $(a,\,b)$ contains no points of
$\lambda_0\Z$, appear to be essential. In fact, it is not clear how to
define the filtered Floer homology of a compactly supported
Hamiltonian on a geometrically bounded open manifold without requiring
$P$ to be spherically rational.
\end{Remark}

\section{Floer homological counterpart in the general case}
\labell{sec:general}

Throughout this section, we assume that $P$ satisfies the following
two conditions:

\begin{itemize}
\item[(a)] $P$ is spherically rational and
\item[(b)] for any Hamiltonian on $P$, changing (the equivalence class
of) a capping of a
periodic orbit necessarily alters its action value, i.e.,
$\ker[\omega]|_{\pi_2(P)}\subset \ker c_1(TP)|_{\pi_2(P)}$.
\end{itemize}
Note that in the setting of Theorem \ref{thm:main} one can ensure that
these requirements are met, as a consequence of either (i) or (ii), by
replacing $P$ by a small neighborhood of $M$.

\subsection{Generalization of Proposition \ref{prop:floer}}
\labell{sec:Floer-gen}

Fix a symplectic tubular neighborhood $W$ of $M$ and an almost complex
structure $J$ on $P$.  Let $K\colon P\to \R$ be an autonomous
Hamiltonian attaining its Morse-Bott non-degenerate minimum $K=0$
along a closed symplectic submanifold $M\subset P$ and let $H$ be
defined exactly as in Section \ref{sec:GG1} with $\{K\leq
r^2+\eps_0\}\subset W$. By (a), in the notation of Section
\ref{sec:FN}, the filtered Floer homology $\HF^{(a,\,b)}_*(H)$ is
defined whenever $0<a<b<\lambda_0$ and $b-a<h=h(W,J)$. Then, the
following analogue of Proposition \ref{prop:floer} holds:

\begin{Proposition}
\labell{prop:floer2}

There exists a function $C(r)>0$ of $r>0$ such that $C(r)\to 0$ as
$r\to 0$ and, once $r>0$ is sufficiently small, we have
$$
\HF^{(a,\,b)}_{n_0}(H)\neq 0
$$
for any $H$ as above with $\max H=C(r)$ and for some interval
$(a,\,b)$ with $C(r)< a<b <2C(r)$. (Here, as in Proposition
\ref{prop:floer}, $n_0=1+(\codim M-\dim M)/2$.)
\end{Proposition}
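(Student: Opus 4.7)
The plan is to repeat, with the filtered Floer--Novikov homology of Section \ref{sec:FN} in place of the filtered Floer homology, the computation from \cite{GG} that establishes Proposition \ref{prop:floer}. Assumptions (a) and (b) on $P$ are precisely what is needed: (a) makes the homology $\HF^{(a,b)}_*$ of Section \ref{sec:FN} well defined, and (b) makes the action filtration insensitive to the choice of capping in the action window that will concern us, so that the index computation cannot be spoiled by recapping. The task then decomposes into three pieces: choosing $C(r)$ compatibly with the quantitative restrictions built into the definition of $\HF^{(a,b)}_*$; porting the Morse--Bott model computation from \cite{GG} into the present setting; and transferring the model calculation to $H$ via the monotone homotopy maps and long exact sequence from Section \ref{sec:FN}.

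Concretely, I would first choose $C(r)>0$ small enough that $2C(r) < \min\{\lambda_0,\, h(W,J)\}$, where $W$ is a fixed open neighborhood of $\{K\leq r^2+\eps_0\}$ and $h$ is the threshold of Lemma \ref{lemma:floer}. This ensures that for every sub-interval $(a,b)\subset (C(r),2C(r))$ with endpoints outside $\CS(H)$, the group $\HF^{(a,b)}_*(H)$ is defined and the homotopy invariance/long exact sequence of Section \ref{sec:FN} are available. Next, using the Morse--Bott structure of $K$ along $M$, I would sandwich $H$ by two model Hamiltonians $F^-\leq H\leq F^+$, each of the form $f^\pm\circ K$ with $f^\pm$ a decreasing profile chosen so that the non-trivial contractible one-periodic orbits of $F^\pm$ with action inside $(C(r),2C(r))$ are, after a small perturbation, exactly the short fiber-direction orbits produced by the linearization of the flow of $K$ in the symplectic normal bundle $TM^\perp$. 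As in \cite{GG}, a direct computation on the Morse--Bott normal-form side shows that these orbits contribute a non-trivial class in degree $n_0 = 1+(\codim M - \dim M)/2$ (the summand $+1$ coming from the fiber oscillation, $-\dim M/2$ from the constant-on-$M$ direction, and $\codim M/2$ from the normal direction). Applying the monotone continuation maps $\HF_*^{(a,b)}(F^+)\to \HF_*^{(a,b)}(H)\to \HF_*^{(a,b)}(F^-)$ and arranging $F^\pm$ so that the composition is non-zero forces $\HF^{(a,b)}_{n_0}(H)\neq 0$.

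The main obstacle will be the third step: verifying that the model Floer complex and its continuation maps survive the restriction to the short action window $(C(r),2C(r))$ dictated by the Floer--Novikov setup. Bubbling and escape to infinity are excluded by spherical rationality together with Lemma \ref{lemma:floer} only as long as $2C(r)<\lambda_0$ and $2C(r)<h$, which explains why $\max H$ must be pinned to $C(r)$ rather than merely bounded below as in Proposition \ref{prop:floer}. Condition (b) plays an essential role here: without it, recappings by spheres $w$ with $\langle\omega,w\rangle=0$ but $\langle c_1(TP),w\rangle\neq 0$ would produce additional generators of $\CF^{(a,b)}_*(H)$ in unintended degrees and break the tight index accounting that places the model class precisely in degree $n_0$. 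Once window compatibility and recapping rigidity are established, the identification of the short fiber orbits, the computation of their index, and the non-vanishing of the resulting Floer class all go through essentially word-for-word as in \cite{GG}, yielding the required $\HF^{(a,b)}_{n_0}(H)\neq 0$.
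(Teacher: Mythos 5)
There is a genuine gap in your second step: the squeezing Hamiltonians cannot be taken of the form $f^\pm\circ K$. If $F^\pm$ is a function of $K$, then its non-trivial one-periodic orbits are exactly (reparametrized) closed orbits of $K$ on the nearby levels --- the very orbits whose existence is the content of Theorem \ref{thm:main} --- so $\HF^{(a,\,b)}_*(F^\pm)$ is no more computable than $\HF^{(a,\,b)}_*(H)$ itself, and the ``arranging $F^\pm$ so that the composition is non-zero'' step has nothing to stand on. Nor does a small perturbation reduce matters to ``short fiber-direction orbits produced by the linearization'': without a compatibility hypothesis between $d^2_MK$ and $\omega$, the linearized normal flow is a fiberwise linear Hamiltonian flow with non-resonant frequencies in general, hence typically has \emph{no} periodic orbits, and there is no identification of the orbits of $f^\pm\circ K$ with such model orbits. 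This is precisely the difficulty that the construction in the paper (following \cite{GG}) is designed to circumvent: the functions $F^\pm$ are taken to be functions of $\rho$, the fiberwise \emph{Hermitian} norm squared on $(TM)^\omega$ (normalized by $4\pi$), whose flow is the explicit fiberwise Hopf rotation. Then every non-trivial one-periodic orbit of $F^\pm$ in the relevant action window is a multiply covered Hopf circle in a fiber, its action with respect to the fiberwise capping and its Conley--Zehnder index interval are computed explicitly, the level-wise homology is identified via the local Floer homology computation of Lemma \ref{lemma:Poz}, and the sandwiching $F^-\leq H\leq F^+$ becomes a genuine geometric step: the shell $\{r^2-\eps\leq K\leq r^2+\eps\}$ is pinched between levels of $\rho$ determined by the shell $Z$ of $d^2_MK$, and $C(r)=8\pi^2\rho_3^+$ is chosen proportional to $r^2$ so that the slopes of $F^\pm$, and hence the number of non-trivial levels, are independent of $r$.

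The rest of your outline is consistent with the paper's argument: the constraint $2C(r)<\min\{\lambda_0,h(W,J)\}$, the role of (a) in making $\HF^{(a,\,b)}_*$ well defined, the role of (b) in forcing only fiberwise cappings to contribute in the window, and the final passage $\HF_*^{(a,\,b)}(F^+)\to\HF_*^{(a,\,b)}(H)\to\HF_*^{(a,\,b)}(F^-)$ combined with the long exact sequence and a monotone homotopy (sliding the graph of $F^+$ onto that of $F^-$ without action values of index-relevant levels crossing $a$ or $b$) to show the composition is an isomorphism. But all of this bookkeeping --- the action values $A(x^\pm_l)$, $A(y^\pm_l)$, the index intervals, and the verification that only $A(x^\pm_1)$ lies in $(a,\,b)$ in the relevant degrees --- is only possible because the model is built from $\rho$, not from $K$; with your choice of $F^\pm$ the argument does not get off the ground.
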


\begin{Remark}
Since the proposition concerns only a small neighborhood of $M$,
requirement (a) can be replaced by the condition that $M$ is
spherically rational and, in (b), $[\omega]|_{\pi_2(P)}$ can be
replaced $[\omega]|_{\pi_2(M)}$.
\end{Remark}

\subsection{Proof of Proposition \ref{prop:floer2}}

To prove the proposition we will, as in \cite{GG}, construct functions
$F^\pm$ such that
$$
F^-\leq H\leq F^+
$$
and $\HF^{(a,\,b)}_{n_0}(F^\pm)\cong\Z_2$ and the monotone homotopy
map
$$
\Psi\colon \Z_2\cong \HF^{(a,\,b)}_{n_0}(F^+)\to \HF^{(a,\,b)}_{n_0}(F^-)
\cong\Z_2
$$
is an isomorphism if $r>0$ is sufficiently small.  Then
$\HF^{(a,\,b)}_{n_0}(H)\neq 0$, for $\Psi$ factors through
$\HF^{(a,\,b)}_{n_0}(H)$.  The argument closely follows, with some
simplifications, the proof of Proposition \ref{prop:floer} given in
\cite{GG} and we only briefly outline its key elements.

\subsubsection{Functions $F^\pm$ and the parameters $a$, $b$ and $C(r)$}

The almost complex structure $J$ and the symplectic form $\omega$ give
rise to a Hermitian metric on the normal bundle $(TM)^\omega$ to $M$.
Without loss of generality we may assume that $W$ is a tubular
neighborhood of $M$ in $P$; see Section \ref{sec:darboux}.  In
particular, $W$ is equipped with projection $W\to M$ whose fibers are
identified with fiber-wise balls in the normal bundle $(TM)^\omega$
and this identification preserves the symplectic structure. Denote by
$\rho \colon W\to \R$ the square of the Hermitian norm on
$(TM)^\omega$ divided by $4\pi$, i.e., $\rho(X)=\|X\|^2/(4\pi)$ when
$X$ is viewed as a point in $(TM)^\omega$. It is easy to see that all
levels of $\rho$ are comprised of one-periodic orbits of its
Hamiltonian flow.  These orbits are the Hopf circles lying in the
fibers and bounding symplectic area $4\pi^2\rho$; see, e.g.,
\cite{CGK,GG}.

The normal-direction Hessian $d^2_M K$ along $M$ can also be viewed as
a fiber-wise quadratic function $d^2_M K\colon W\to \R$.  (Since $K$
is Morse--Bott non-degenerate, $d^2_M K$ is a fiber-wise metric on
$(TM)^\omega$. In general, this metric is not Hermitian.)  Recall also
that $0<\eps<\eps_0=r^2/10$; see Section
\ref{sec:GG1}. Thus, when $r>0$ is small, the shell $r^2-\eps\leq
K\leq r^2+\eps$ is contained in the shell
$$
Z=\{r^2-2\eps_0\leq d^2_MK\leq r^2+2\eps_0\}.
$$
Set
$$
\rho_3^-=\min_Z \rho, \quad \rho_2^-=2\rho_3^-/3\quad\text{and}\quad
\rho_1^-=\rho_3^-/3,
$$
so that the points $\rho^-_1$ and $\rho^-_2$ divide the interval
$[0,\,\rho^-_3]$ into three equal parts. Furthermore, let, as in Fig.\ 1,
$$
\rho_1^+=\max_Z\rho, \quad \rho_2^+=\rho_1^+ + \rho_1^-
\quad\text{and}\quad \rho_3^+=\rho_1^+ + 2\rho_1^-,
$$
and
$$
C(r)=8\pi^2\rho_3^+.
$$
In other words, $C(r)/2$ is the symplectic area bounded by
one-periodic orbits of $\rho$ on the level $\rho=\rho_3^+$.  It is
clear that $C(r)\to 0$ as $r\to 0$.  From now on, we assume that $\max
H=C(r)$.

%%%%%%%%%%%%--FUNCTIONS
\begin{figure}[htbp]
\begin{center}
\input{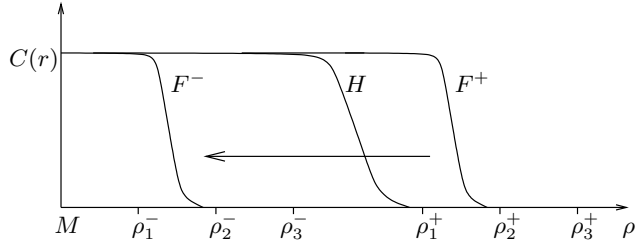} %this is f14, f13s2 the arrow moved.
\caption{The functions $F^\pm$ and the homotopy.}
\label{figure:functions}
\end{center}
\end{figure}
%%%%%%%%%%%%%

The function $F^-\colon [0,\,\rho_3^-]\to\R$ is defined as follows
(see Fig.\ 1):
\begin{itemize}

\item $F^-(\rho)\equiv C(r)$ for $\rho\in [0,\rho_1^-]$;

\item $F^-(\rho)\equiv 0$ for $\rho\in [\rho_2^-,\rho_3^-]$;

\item $F^-(\rho)$ is a linear function on $[\rho_1^-,\rho_2^-]$
ranging from $C(r)$ to $0$ with irrational slope, except for $\rho$
close to $\rho_1^-$ and $\rho_2^-$ where $F^-$ is concave (with
decreasing $(F^-)'\leq 0$) and, respectively, convex (with increasing
$(F^-)'\leq 0$).

\end{itemize}
We extend the function $F^-\circ\rho$ from the domain
$\{\rho<\rho_3^-\}$ to $P$ by setting it to be identically zero
outside the domain and, abusing notation, refer to the resulting
Hamiltonian on $P$ as $F^-$.

It is essential that the ratio $C(r)/\rho_1^-$ is independent of $r$
(since $d^2_M K$ and $\rho$ are both fiber-wise quadratic), and hence
the slope of $F^-$ can also be taken independent of $r$.

Let also $F^+\colon [0,\,\rho_3^+]\to\R$ be defined by
\begin{itemize}

\item $F^+(\rho)\equiv C(r)$ for $\rho\in [0,\rho_1^+]$;

\item $F^+(\rho)=F^-(\rho-\rho_1^+ +\rho_1^-)$ for $\rho\in
[\rho_1^+,\rho_2^+]$;

\item $F^+(\rho)\equiv 0$ for $\rho\in [\rho_2^+,\rho_3^+]$.
\end{itemize}
In other words, $F^+$ is obtained from $F^-$ by shifting the graph of
$F^-$ to the left by $\rho_1^+ -\rho_1^-$ and extending it to the
remaining interval $[0,\,\rho_1^+ -\rho_1^-]$ as a function
identically equal to $C(r)$.  In particular, $F^+$ has the same slope
as $F^-$ and this slope is independent of $r$.  Finally, we extend
$F^+\circ\rho$ to $P$ in the same fashion as $F^-\circ\rho$ and again
keep the notation $F^+$ for the resulting Hamiltonian.

By the construction, $F^-\leq H\leq F^+$. Set
$$
a=C(r)+2\pi^2\rho_1^-\quad\text{and}\quad
b=C(r)+6\pi^2\rho_3^+.
$$
Then $a>C(r)$ and, since $C(r)=8\pi^2 \rho_3^+$, we have $b<2 C(r)$.
In what follows, we will assume that $r>0$ is sufficiently
small. Thus, in particular, $2 C(r)<\lambda_0$ and $b-a< C(r)<h(W,J)$,
and the Floer homology groups in question are defined.

\subsubsection{One-periodic orbits of $F^\pm$}

Trivial periodic orbits of $F^\pm$ are either the points where
$F^\pm=C(r)$ or the points where $F^\pm=0$.  Non-trivial one-periodic
orbits of the functions $F^\pm$ fill in entire energy levels of
$F^\pm$. We break down the corresponding energy values into two groups
$\rho=x^\pm_l$ and $\rho=y^\pm_l$ for each of the functions $F^\pm$;
see Fig.\ 2.

%%%%%%%%%%%%--CRITICAL LEVELS
\begin{figure}[htbp]
\begin{center}
\input{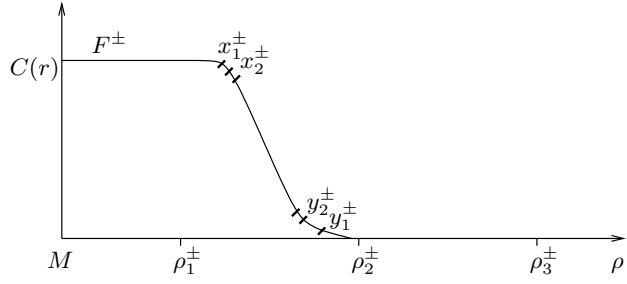} %this is p1s2.fig
\caption{The energy levels $x_l^\pm$ and $y_l^\pm$.}
\label{figure:levels}
\end{center}
\end{figure}
%%%%%%%%%%%%

The first group of levels is located in the region where $F^\pm$ is
convex. We label these levels by the corresponding values of $\rho$ in
the increasing order:
$$
x_1^\pm<x_2^\pm<\ldots<x_k^\pm,
$$
where all $x_j^\pm$ are close to and slightly greater than
$\rho_1^\pm$.

The levels from the second group are located in the region where
$F^\pm$ is concave. Again, we label these levels by the corresponding
values of $\rho$, but now in the decreasing order:
$$
y_k^\pm < \ldots < y_2^\pm < y_1^\pm,
$$
where all $y_l^\pm$ are close to and slightly smaller than
$\rho_2^\pm$.

Note that the number $k$ of levels in every group is completely
determined by the slope of the Hamiltonian. In particular, $k$ is the
same for all groups and is independent of $r$.

One-periodic orbits of $F^\pm$ on the levels $\rho=x_l^\pm$ and
$\rho=y_l^\pm$ are the fiber-wise Hopf circles traversed $l$-times.
We equip these orbits with cappings by discs contained in the fibers
and refer to this capping as fiber-wise. Denote by $A(x_l^\pm)$ and
$A(y_l^\pm)$ the resulting action values. (The action is independent
of the choice of an orbit on the level.) It is easy to see that
$$
A(x_l^\pm)=C(r)+4\pi^2 l x_l^\pm+\ldots=C(r)+4\pi^2 l \rho_1^\pm+\ldots
$$
and
$$
A(y_l^\pm)=4\pi^2 l y_l^\pm+\ldots=4\pi^2 l \rho_2^\pm+\ldots,
$$
where the dots denote an error that can be made arbitrarily small by
making $F^\pm$ close to a piece-wise linear function; see \cite{GG}.
Note that $A(x^\pm_1)$ is in $(a,\,b)$ while $A(y^\pm_1)$ and
$A(y^\pm_2)$ are outside this interval.

We require $r>0$ to be so small that all $A(x_l^\pm)$ and $A(y_l^\pm)$
are in the range from $0$ to $h(W,J)$.  This condition is indeed met
for small $r>0$, as $k$ is independent of $r$ and the largest of these
actions does not exceed $\max\{A(x^+_k),A(y^+_k)\}\leq C(r)+4\pi^2 k
\rho_2^\pm+\ldots$ .

\subsubsection{Floer homology of $F^\pm$ and the monotone homotopy map $\Psi$}

Let $(\alpha,\,\beta)$ be an interval in $(a,\,b)$ containing only one
point of $\CS(F^\pm)$. Denote by $\Sigma$ the unit sphere bundle in
$(TM)^\omega$. (Thus, the levels $\rho=x^\pm_l$ and $\rho=y^\pm_l$ are
diffeomorphic to $\Sigma$.)

\begin{Lemma}
\labell{lemma:Poz}
$\HF^{(\alpha,\,\beta)}_*(F^\pm)=H_{*-\kappa}(\Sigma;\Z_2)$, where the
shift of degrees $\kappa$ depends on the level.
\end{Lemma}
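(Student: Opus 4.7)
The plan is to prove the lemma by a standard Morse--Bott Floer homology calculation in the spirit of Po\'zniak (hence the label of the lemma), following very closely the calculation carried out in \cite{GG}. The point is that $F^\pm$ depends only on $\rho$, so every non-trivial one-periodic orbit of $F^\pm$ lies on one of the levels $\rho=x_l^\pm$ or $\rho=y_l^\pm$, and on each such level the set of one-periodic orbits is precisely the collection of $l$-fold covered fiber-wise Hopf circles, which is canonically diffeomorphic to $\Sigma$. Since the action $A_{F^\pm}$ is constant on each such level (when the fiber-wise capping is used), the single point of $\CS(F^\pm)$ in $(\alpha,\beta)$ corresponds to exactly one such level, and hence to a copy of $\Sigma$ worth of orbits.

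Next I would perturb $F^\pm$ to make these orbits non-degenerate. Concretely, fix a Morse function $f$ on $\Sigma$ and consider a small, time-dependent perturbation $\tilde F^\pm$ of $F^\pm$, supported in a thin shell around the chosen level, whose effect on the time-one return map along the Hopf fibers is (an arbitrarily small time-one flow of) $f$. For a sufficiently small perturbation the one-periodic orbits of $\tilde F^\pm$ with action in $(\alpha,\beta)$ are in one-to-one correspondence with the critical points of $f$ on $\Sigma$; their actions all lie in $(\alpha,\beta)$; and their Conley--Zehnder indices, computed with respect to the fiber-wise capping, equal $\kappa+\mathrm{ind}_f(p)$ for a single constant $\kappa$ depending only on the level (the value $\kappa$ being the Conley--Zehnder contribution of the $l$-fold Hopf rotation together with the signature contribution from the convex/concave region where the orbit sits). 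This is exactly the computation performed in \cite{GG}, and I would simply invoke it here.

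Finally, by the standard Po\'zniak-type argument, for a generic almost complex structure of the form used in Section~\ref{sec:FN} and for a sufficiently $C^2$-small perturbation, the Floer connecting trajectories between two such orbits with index difference one are in bijection with the Morse anti-gradient trajectories of $f$ on $\Sigma$ between the corresponding critical points. Thus the Floer complex for $\tilde F^\pm$ in the window $(\alpha,\beta)$ is canonically isomorphic, up to the degree shift $\kappa$, to the Morse complex $C_*(f;\Z_2)$ of $\Sigma$, and taking homology gives $\HF^{(\alpha,\beta)}_*(F^\pm)=H_{*-\kappa}(\Sigma;\Z_2)$ as required. The identification is invariant under the continuation maps and hence is independent of the perturbation within the allowed class.

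The main technical issue to check is the last step: localization of Floer trajectories near the critical level so that the Po\'zniak identification with Morse trajectories applies. One needs to verify that no connecting trajectory whose endpoints both lie on the critical level can leave a small neighborhood of that level, which follows from an energy estimate combined with the shape of $F^\pm$ (the slopes on either side of the convex/concave region create a barrier), together with the compactness estimates already built into Section~\ref{sec:FN} (in particular Lemma~\ref{lemma:floer}) which keep trajectories inside $W$. Once this localization is established, everything else is the standard Morse--Bott machinery and the calculation is identical to the one in \cite{GG}, so I would present it largely by reference.
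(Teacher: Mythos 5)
Your outline is the Morse--Bott/Po\'zniak route, and in the symplectically aspherical, geometrically bounded case it would indeed give the lemma essentially by citing \cite{Poz}. But the lemma is needed here precisely in the general setting of hypotheses (a) and (b) of Section \ref{sec:Floer-gen}, where the filtered complex is generated by \emph{capped} orbits, and your argument never mentions cappings. Two concrete gaps result. First, you identify the generators in the window $(\alpha,\beta)$ with the critical points of a Morse function $f$ on $\Sigma$; but a priori each orbit contributes once for every equivalence class of capping, and one has to argue that only the fiber-wise cappings have action in the window -- this uses (a), (b) and the smallness of $r$. More seriously, even with the correct set of generators, your identification of the differential presupposes that a Floer trajectory $u$ between two fiber-wise-capped orbits in the window glues the fiber-wise capping of one end to (an equivalent of) the fiber-wise capping of the other; otherwise $u$ is not a matrix entry between the listed generators at all. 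This compatibility is the heart of the paper's proof: it is extracted from the small-energy identity $\langle\omega,u\rangle=0=\langle c_1(TP),u\rangle$ (see \eqref{eq:zero}) combined with the projection argument showing $\pi(w)$ is homotopic to $l\cdot\pi(u)$, and it is exactly where (a) and (b) enter. Without this step the asserted isomorphism with the Morse complex of $\Sigma$ does not follow.

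Second, the step you call ``standard Po\'zniak machinery'' -- the bijection between low-energy Floer trajectories of the perturbed Hamiltonian and Morse trajectories of $f$ on $\Sigma$ -- is precisely the extension of \cite[Corollary 3.5.4]{Poz} that the paper states is not entirely straightforward here: $\Sigma$ is a manifold of \emph{non-constant} orbits and $F^\pm$ is not $C^2$-small near it, so the usual Floer-equals-Morse comparison does not apply directly, and in the non-aspherical setting the correspondence must also respect cappings. The paper sidesteps this by an ad hoc reduction: write $F=F_0+f$, where $F_0$ is linear in $\rho$ and generates a one-periodic loop $\psi^t$ of fiber-wise rotations, so that $\Sigma$ becomes a Morse--Bott non-degenerate critical hypersurface of $f$ with $f|_\Sigma=0$; compute the local Floer homology of $f$ as local Morse homology (Example \ref{ex:loc-fl}); transfer the result back to $F^\pm$ by composing with $\psi^t$, keeping track of the action shift $c=A(z)$, the degree shift $\kappa'$, and the capping compatibility discussed above; and finally pass from local to filtered global homology via \eqref{eq:loc-glob}, since $(\alpha,\beta)$ contains a single point of the action spectrum. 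By contrast, the localization issue you single out as the main difficulty is comparatively routine and is absorbed by this single-action-value argument together with Lemma \ref{lemma:floer}. To salvage your route you would need either to prove the Morse--Bott trajectory correspondence for non-constant orbits in this generality or to perform a reduction of the above kind; in either case the capping bookkeeping cannot be omitted.
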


We defer the proof of the lemma to Section \ref{sec:Poz-pf} and
continue the proof of the proposition.

To determine the exact value of the shift $\kappa$, consider a
non-degenerate time-dependent perturbation $\tF^\pm$ of $F^\pm$ that
differs from $F^\pm$ only in small neighborhoods of the levels
$x^\pm_l$ and $y^\pm_l$. The perturbation $\tF^\pm$ can be explicitly
constructed (see \cite[Section 5.2.5]{GG}) so that every level
$x^\pm_l$ and $y^\pm_l$ splits into a number of non-degenerate orbits
contained in the fibers of $W$ and these orbits, equipped with
fiber-wise cappings, have Conley--Zehnder indices in the intervals
\begin{eqnarray*}
&&[(2l-1)q-m+1,\,(2l+1)q+m] \quad\text{for the level $\rho=x^\pm_l$ and}\\
&&[(2l-1)q-m,\,(2l+1)q+m-1] \quad\text{for the level $\rho=y^\pm_l$,}
\end{eqnarray*}
where $2m=\dim M$ and $2q=\codim M$. Note that the length of each of
these intervals is $2(m+q)-1=\dim \Sigma$. Hence, $\kappa$ is the left
end-point of the index interval, i.e.,
$$
\kappa=(2l-1)q-m+1\text{ for $\rho=x^\pm_l$ and }
\kappa=(2l-1)q-m\text{ for $\rho=y^\pm_l$.}
$$
In particular, $\kappa=n_0$ for $x^\pm_1$ and, by Lemma
\ref{lemma:Poz}, $\HF^{(\alpha_1,\,\beta_1)}_{n_0}(F^\pm)=\Z_2$ when
$(\alpha_1,\,\beta_1)$ is a small interval containing no other points
of $\CS(F^\pm)$ than $A(x^\pm_1)$.

Arguing as in \cite{GG}, we establish the equality
$\HF^{(a,\,b)}_{n_0}(F^\pm)=\Z_2$ by utilizing the Floer homology long
exact sequence; see Section \ref{sec:FN}.

First note that the only action values of $F^\pm$ and $\tF^\pm$ in
$(a,\,b)$ are those of the orbits from some of the levels $x^\pm_l$
and $y^\pm_l$, equipped with fiber-wise cappings. In other words, only
the fiber-wise cappings are relevant. This is a consequence of (b) and
the requirement that $r>0$ is so small that all $A(x^\pm_l)$ and
$A(y^\pm_l)$ are in $(0,\lambda_0)$. Consider now a family of
intervals with the left end-point sliding down from $\alpha_1$ to $a$
and the right end-point increasing from $\beta_1$ to $b$. The filtered
Floer homology of $F^\pm$ can change only when an end-point of the
interval moves through an action value. More specifically, the
homology in degree $n_0$ can be effected only when an end-point moves
through an action value $A(x^\pm_l)$ or $A(y^\pm_l)$ with index
interval containing $n_0-1$ or $n_0$ or $n_0+1$. The only action
values with index intervals containing $n_0-1$ or $n_0$ are
$A(y_1^\pm)$ and $A(y_2^\pm)$, which are, however, outside the
interval $(a,\,b)$ since $A(y^\pm_1)<A(y^\pm_2)<a$. The levels
$x^\pm_l$ with $l\geq 2$ and $y^\pm_l$ with $l\geq 3$ have index
intervals starting above $n_0+1$. Finally, $A(x_1^\pm)\in
(\alpha_1,\,\beta_1)\subset (a,\,b)$. Hence, as readily follows from
the long exact sequence,
$\HF^{(a,\,b)}_{n_0}(F^\pm)\cong\HF^{(\alpha_1,\,\beta_1)}_{n_0}(F^\pm)
\cong\Z_2$.

The fact that the monotone homotopy map $\Psi$ is an isomorphism is
established in a similar fashion. Consider a monotone homotopy $F^s$
from $F^+$ to $F^-$ indicated by the arrow in Fig.\ 1 and obtained by
sliding the graph of $F^+$ to the left until it matches the graph of
$F^-$.  By the long exact sequence, $\HF^{(a,\,b)}_{n_0}(F^s)$ can
change only when action values of $F^s$ with index interval containing
$n_0-1$ or $n_0$ or $n_0+1$ enter or leave the interval
$(a,\,b)$. This, however, never happens, as is clear from the
calculation of actions and index intervals for $F^\pm$; see \cite{GG}
for more details.

To complete the argument, it remains to prove Lemma \ref{lemma:Poz}.

\subsection{Local Floer homology and the proof of Lemma \ref{lemma:Poz}}
\labell{sec:Poz-pf}

When $P$ is geometrically bounded and symplectically aspherical, the
lemma is an immediate consequence of the results of Po\'zniak,
\cite{Poz}, and, in particular, of \cite[Corollary 3.5.4]{Poz}; see
also \cite{GG} and \cite{BPS}. Moreover, Corollary 3.5.4 from
\cite{Poz} can be extended to a broader class of manifolds to apply in
the setting of Lemma \ref{lemma:Poz}.  However, such an extension is
not entirely straightforward even though it is essentially a
consequence of Po\'zniak's calculation of local Lagrangian Floer
homology for clean intersections, and we prefer to give a simpler
\emph{ad hoc} proof.

\subsubsection{Local Floer homology}
In this section we briefly recall the construction of local Floer
homology for autonomous Hamiltonians, following \cite{F:hs}; see also,
e.g., \cite{BPS,Gi:conley,Poz}.

Consider an autonomous Hamiltonian $F$ on a spherically rational
symplectic manifold $P^{2n}$ or, more generally, on an open subset
of $P$ invariant under the flow of $F$. Let $\Sigma$ be a compact,
connected set of fixed points of $\varphi=\varphi_F^1$.  Then $\Sigma$
is automatically invariant under the flow of $F$ and hence comprised
entirely of one-periodic orbits of $F$. In what follows, we will also
assume that $\Sigma$ is isolated, i.e., every fixed point of $\varphi$
in some neighborhood of $\Sigma$ (an isolating neighborhood) is
necessarily a point of $\Sigma$.

Fix an isolating neighborhood $U$ of $\Sigma$ and let $\tF$ be a
 one-periodic in time perturbation of $F$ such that $F-\tF$ is
 $C^2$-small and supported in $U$, and all one-periodic orbits of $\tF$
 in $U$ are non-degenerate. Let $\CF_k(\tF,U)$ be the vector space
 over $\Z_2$ generated by capped one-periodic orbits of $\tF$ in $U$
 of index $k$. (Note that we do not require the cappings to be
 contained in $U$.) Furthermore, fix $\eps>0$ and a one-periodic
 almost complex structure $J$ on $P$, and define the Floer
 differential $\p_\eps\colon \CF_k(\tF,U)\to\CF_{k-1}(\tF,U)$ by the
 standard formula \eqref{eq:d-floer}, where now all Floer connecting
 trajectories are required to have energy smaller than $\eps$.

The standard argument shows that $\p_\eps^2=0$ whenever $\eps>0$ and
$\|F-\tF\|_{C^2}$ are small enough, and, moreover, the resulting local
Floer homology spaces $\HF_*(F,\Sigma)$ are independent of $\eps>0$,
$\tF$ and $J$; see \cite{F:hs} and also, e.g.,
\cite{BPS,Gi:conley,HS,McSa,Poz,Sa,SZ}. (To be more precise, here we
need to require that $\eps<\eps_0(U,J)$ and
$\|F-\tF\|_{C^2}<\delta_0(U,J,\eps)$. Then the Floer anti-gradient
trajectories connecting periodic orbits in $U$ are confined to $U$ due
to the energy estimates from, e.g., \cite{SZ,Sa}. Bubbling-off cannot
occur, for $\eps<\lambda_0$.  As a consequence, the standard
compactness and continuation arguments apply.)

Moreover, the complex $\CF_*(\tF,U)$ carries a natural action
filtration (see Section \ref{sec:FN}) and we denote the resulting
filtered local Floer homology by $\HF_*^{(a,\,b)}(F,\Sigma)$. This
homology spaces are well-defined only when the points $a$ and $b$ are
outside the action spectrum $\CS(F,\Sigma)$ of $F|_\Sigma$.  (By
definition, $\CS(F,\Sigma)$ is comprised of action values of
one-periodic orbits of $F$ in $\Sigma$ with all possible cappings.  It
is easy to see that $\CS(F,\Sigma)$ has zero measure and is closed and
nowhere dense since $P$ is spherically rational.)  The filtered Floer
homology is essentially ``localized'' at the points of $\CS(F,\Sigma)$
and hence here, in contrast with the global case, the filtration plays
a rather superficial role. We will use it only to distinguish
contributions from different cappings of the same orbit.

Filtered local Floer homology inherits, in an obvious way, most of the
properties of ordinary filtered Floer homology. We will need the
following standard invariance result (cf.\ \cite{BPS,Poz,Gi:conley,Vi:fun}):

\begin{itemize}
\item Let $F^s$ be a family of Hamiltonians continuously parametrized
by $s\in [0,\,1]$ and such that $\Sigma$ is an isolated set of
periodic orbits for all $F^s$. Let $a(s)<b(s)$ be two continuous
functions of $s$ such that $a(s)$ and $b(s)$ are outside
$\CS(F^s,\Sigma)$.  Then the groups
$\HF^{(a(s),\,b(s))}_*(F^s,\Sigma)$ are isomorphic for all $s$.
\end{itemize}

Let us now turn to some examples which are relevant to the proof.

\begin{Example}
Assume that $P$ is symplectically aspherical and hence $\CS(F,\Sigma)$
is comprised of one point $c\in \R$. Then
$\HF_*^{(a,\,b)}(F,\Sigma)=0$ when $c$ is outside $[a,\,b]$ and
$\HF_*^{(a,\,b)}(F,\Sigma)=\HF_*(F,\Sigma)$ if $a<c<b$. Assume
furthermore that $\Sigma$ is a Morse-Bott non-degenerate manifold of
fixed points of $\varphi$, i.e., $\ker(d\varphi_p-I)=T_p\Sigma$ for
all $p\in \Sigma$.  Then $\HF_*(F,\Sigma)=H_{*-\kappa}(\Sigma;\Z_2)$
as is proved in \cite{Poz}.
\end{Example}

\begin{Example}
\labell{ex:loc-fl}

In this example, we assume that $P^{2n}$ satisfies condition (b) in
addition to being spherically rational (condition (a)). Let $\Sigma$
be a Morse-Bott non-degenerate critical manifold of a smooth function
$F$ with, say, $F|_\Sigma=0$. Then $\Sigma$ is an isolated
set of fixed points of $\varphi$, and
\begin{equation}
\labell{eq:fh-coiso}
\HF^{(a,\,b)}_*(F,\Sigma)=H_{*-\kappa(F)}(\Sigma;\Z_2),
\end{equation}
whenever $\Sigma$ is a hypersurface (or, more generally, a coisotropic
submanifold) and $(a,\,b)$ is a short interval containing $0$. Here
$\kappa(F) = n - \mathrm{index}(\Sigma)$, where
$\mathrm{index}(\Sigma)$ is the index of $F$ at $\Sigma$. Note that
$\Sigma$ is also a Morse-Bott non-degenerate manifold of fixed points
of $\varphi$ since $\Sigma$ is coisotropic. Thus, if $P$ is
symplectically aspherical, the identification \eqref{eq:fh-coiso}
becomes a particular case of Po\'zniak's result mentioned in the
previous example.

To establish the general case of \eqref{eq:fh-coiso}, we argue as
follows. First note that $\HF^{(a,\,b)}_*(sF,\Sigma)$ does not change
as $s$ ranges from $1$ to some small value $s_0>0$ such that $s_0F$ is
$C^2$-small. (Here, again we use the assumption that $\Sigma$ is
coisotropic which guarantees that $\Sigma$ is an isolated fixed point
set for all $s\in (0,\,1]$. Furthermore, it is clear from (a) and (b)
that the end points $a$ and $b$ are not in $\CS(sF)$, provided that
$a$ and $b$ are sufficiently close to zero.)  Then,
$\HF^{(a,\,b)}_*(s_0 F,\Sigma)$ can be identified, up to a shift of
degree by $n$, with the local Morse homology of $F$ at $\Sigma$ by
arguing as in \cite{FHS,HS,SZ} and using again conditions (a) and (b);
cf.\ \cite{Gi:conley}.  (Alternatively, one can utilize the PSS
isomorphism, \cite{PSS}, not relying on (b).) Finally, it is a
standard fact that the local Morse homology in question is equal to
$H_{*}(\Sigma;\Z_2)$, up to a shift of degree by
$\mathrm{index}(\Sigma)$.
\end{Example}

Under suitable additional hypotheses, the filtered local Floer
homology of $F$ is equal to the filtered global Floer homology.
Namely, let $F$ and $P$ and $(a,\,b)$ be as in Section \ref{sec:FN}.
Assume that $(a,\,b)$ contains only one point of $\CS(F)$ and that
this point also belongs to $\CS(F,\Sigma)$.  Then, as is easy to see,
\begin{equation}
\labell{eq:loc-glob}
\HF_*^{(a,\,b)}(F,\Sigma)=\HF_*^{(a,\,b)}(F).
\end{equation}

\begin{Remark}
The construction of local Floer homology outlined above goes through
with obvious modifications even when $F$ is not autonomous; see
\cite{F:hs} and, e.g., \cite{BPS,Poz}. The only reason that $F$ is
assumed here to be independent of time is that this assumption makes
the construction much more explicit, simplifies the wording, and is
sufficient for the proof of Lemma \ref{lemma:Poz}.
\end{Remark}

\subsubsection{Proof of Lemma \ref{lemma:Poz}}
Let $F$ be one of the two functions $F^\pm$ and let
$\Sigma=\{\rho=z\}$, where $z=x^\pm_l$ or $z=y^\pm_l$, be one of the
levels in question. Consider the Hamiltonians
$F_0(\rho)=F(z)+F'(z)(\rho-z)$ and $f=F-F_0$ defined on a neighborhood
of $\Sigma$. Then the hypersurface $\Sigma$ is a Morse-Bott
non-degenerate critical manifold of $f$ and $f|_\Sigma=0$.  By Example
\ref{ex:loc-fl}, $\HF^{(\alpha',\,\beta')}_*(f,\Sigma)=H_{*-\kappa(f)}
(\Sigma;\Z_2)$, when $(\alpha',\,\beta')$ is a short interval
containing $0$.

Denote by $c$ the action of $F$ on $\Sigma$ with respect to the
fiber-wise capping, i.e., $c=A(z)$, and set
$\alpha=\alpha'+c$ and $\beta=\beta'+c$. We will show that
\begin{equation}
\labell{eq:id1}
\HF^{(\alpha,\,\beta)}_*(F,\Sigma)=
\HF^{(\alpha',\,\beta')}_{*-\kappa'}(f,\Sigma)
\end{equation}
for some shift of degrees $\kappa'$. This will prove the lemma, since
then, due to \eqref{eq:fh-coiso} and \eqref{eq:loc-glob},
$$
\HF^{(\alpha,\,\beta)}_*(F) = \HF^{(\alpha,\,\beta)}_*(F,\Sigma)=
H_{*-\kappa}(\Sigma;\Z_2),
$$
where $\kappa=\kappa'+\kappa(f)$. (Note that $c$ and $\kappa'$ can be
interpreted as the action and, respectively, the Maslov index of the
loop $\psi^t$, cf.\ \cite[Sections 2.3 and 3.2]{Gi:conley}.)

To establish \eqref{eq:id1}, consider a perturbation $\tf$ of $f$ in a
small neighborhood $U$ of $\Sigma$ as in the construction of local
Floer homology.  Without loss of generality we may assume that $\tf$
is autonomous and all one-periodic orbits of $\tf$ in $U$ are critical
points of $\tf$ and that all such critical points are located on
$\Sigma$. Then these orbits enter $\CF_*^{(\alpha',\,\beta')}(\tf,U)$
equipped with trivial cappings. For any other capping would
necessarily, by (b), move the action outside the range
$(\alpha',\,\beta')$. Let $J=J_t$ be a (time-dependent) almost complex
structure. A Floer anti-gradient trajectory $u$ connecting two
one-periodic orbits of $\tf$ in $U$ is a sphere.  (We are assuming
that $\tf$ is so close to $f$ that $u$ is contained in $U$.)  Since,
by the definition of local Floer homology, the energy of $u$ is small
and the values of $\tf$ at its critical points are close to zero, the
symplectic area of $u$ is small. Therefore, by (a) and (b),
\begin{equation}
\labell{eq:zero}
\left<\omega,u\right>=0=\left<c_1(TP),u\right>.
\end{equation}

The Hamiltonian flow of $F_0$ is a one-periodic loop
$\psi^t=\varphi^t_{F_0}$ of fiber-wise rotations and the
Hamiltonian $\tF=F_0+\tf\circ (\psi^t)^{-1}$ generating the flow
$\psi^t\circ\varphi^t_{\tf}$ is a small perturbation of $F$.  Denote
by $\HF_*^{(\alpha',\,\beta')}(\tf,U)$ and
$\HF_*^{(\alpha,\,\beta)}(\tF,U)$ the homology of the complexes of
$\CF_*^{(\alpha',\,\beta')}(\tf,U)$ and, respectively,
$\CF_*^{(\alpha,\,\beta)}(\tF,U)$. By the definition of local Floer
homology,
$$
\HF_*^{(\alpha',\,\beta')}(\tf,U)=\HF^{(\alpha',\,\beta')}_*(f,\Sigma)
\quad\text{and}\quad
\HF_*^{(\alpha,\,\beta)}(\tF,U)=\HF^{(\alpha,\,\beta)}_*(F,\Sigma),
$$
and \eqref{eq:id1} is equivalent to the isomorphism
\begin{equation}
\labell{eq:id2}
\HF_*^{(\alpha,\,\beta)}(\tF,U) \cong
\HF_{*-\kappa'}^{(\alpha',\,\beta')}(\tf,U).
\end{equation}
This isomorphism is induced by the composition with the loop $\psi^t$.

Indeed, the composition with $\psi^t$ gives rise to a one-to-one
correspondence between one-periodic orbits of $\tf$ contained in $U$
and those of $\tF$, and the latter are fiber-wise Hopf circles
(perhaps, multi-covered). Furthermore, as is well known,
$\psi(u)(s,t):=\psi^t(u(s,t))$ is a Floer anti-gradient trajectory for
$\tF$ in $U$ with respect to the almost complex structure
$\psi(J):=d\psi^t \circ J_t \circ (d\psi^t)^{-1}$ if and only if $u$
is a Floer anti-gradient trajectory for $\tf$ and $J$ contained in
$U$; see, e.g., \cite{Gi:conley,schwarz}. (Moreover, the regularity
requirements are satisfied for $(\tf, J)$ if and only if they are
satisfied for $(\tF,\psi(J))$.)

Let us equip the one-periodic orbits of $\tF$ contained in $U$ with
fiber-wise cappings. Then these capped orbits are the only orbits
entering $\CF_*^{(\alpha,\,\beta)}(\tF,U)$, as again follows from (a)
and (b).  Hence, to establish \eqref{eq:id2} and thus finish the proof
of the lemma, it is sufficient to show that the Floer connecting
trajectories $\psi(u)$ are compatible with such cappings.  In other
words, it remains to prove that whenever $\psi(u)$ is a Floer
anti-gradient trajectory from $\gamma_0$ and $\gamma_1$ and $v_0$ and
$v_1$ are cappings of $\gamma_0$ and $\gamma_1$ by fiber-wise Hopf
discs (perhaps, multi-covered), the capping $v_1$ is equivalent to the
capping $v_0\# \psi(u)$ obtained by attaching $v_0$ to $\psi(u)$. To
this end, consider the sphere $w$ obtained by attaching the suitably
oriented discs $v_0$ and $v_1$ to $\psi(u)$. The cappings $v_0\#
\psi(u)$ and $v_1$ are equivalent if and only if
$$
\left<\omega,w\right>=0=\left<c_1(TP),w\right>.
$$
Let $\pi\colon W\to M$ be the tubular neighborhood projection. Since
$v_0$ and $v_1$ lie in the fibers of $\pi$, the projections $\pi(v_0)$
and $\pi(v_1)$ are points. Hence, $\pi(w)$ is homotopic to
$l\cdot\pi(u)$, where $l\in\Z$, for $\psi^t$ is comprised of
fiber-wise rotations.  Then, by \eqref{eq:zero},
$$
\left<c_1(TP),w\right>=\left<c_1(TP|_M),\pi(w)\right>
=l\left<c_1(TP|_M),\pi(u)\right>=l\left<c_1(TP),u\right>=0.
$$
A similar calculation, showing that $\left<\omega,w\right>=0$,
completes the proof of \eqref{eq:id2} and of the lemma.

\section{Proof of the main theorem}
\labell{sec:gen-pf}

\subsection{Proof of Theorem \ref{thm:main}}
\labell{sec:pf-general}
When (i) holds, the proof of the main theorem in the general case is
identical word-for-word to the proof for geometrically bounded,
symplectically aspherical manifolds (Section \ref{sec:proof-sa}) with
Proposition \ref{prop:floer2} used in place of Proposition
\ref{prop:floer}.

To establish the theorem when (ii) holds, we use, in addition to the
Sturm comparison theorem for $K$, the action bounds from Proposition
\ref{prop:floer2} to control the effect of capping on the
Salamon--Zehnder invariant.

Fix small parameters $r>0$ and $\eps>0$ with $\eps<r^2/10$ and
consider a Hamiltonian $H$ as in Section \ref{sec:Floer-gen}. Recall
that (ii) implies that the hypotheses (a) and (b) of Section
\ref{sec:Floer-gen} are satisfied. Then, as in Section
\ref{sec:proof-sa}, it readily follows from Proposition
\ref{prop:floer2} that $H$ has a (non-trivial) one-periodic orbit
$\gamma$ with capping $v$ such that
\begin{equation}
\labell{eq:index2}
1-\dim M = n_0-\dim P/2 \leq \Delta_v(\gamma,H)\leq n_0+\dim P/2=1+\codim M
\end{equation}
and
$$
a\leq A_H(\gamma,v)\leq b.
$$
Recall that $0<C(r)<a<b<2C(r)$ and $0\leq H\leq C(r)$, where $C(r)\to
0$ as $r\to 0$. Hence, the symplectic area of $v$ is \emph{a priori}
bounded:
\begin{equation}
\labell{eq:area}
\left|\int_v\omega\right|\leq \const,
\end{equation}
where $\const$ is independent of $r$ and $\eps$ and the Hamiltonian
$H$. (Throughout the rest of the proof we adopt the notational
convention from Section \ref{sec:growth}: in all expressions $\const$
will stand for a constant which is independent of $r$, $\eps$, $H$,
and $(\gamma, v)$, provided that $r$ is sufficiently small. The value
of this constant is allowed to vary from one formula to another. A
similar convention is also applied to the constants $a>0$ and $b$ and
$c$.)

As in Section \ref{sec:proof-sa}, we may view $\gamma$ as a
$T$-periodic orbit of $K$ since $H$ is a function of $K$ and the orbit
$\gamma$ is non-trivial. Then, by \eqref{eq:length},
\begin{equation}
\labell{eq:length2}
l(\gamma)\leq \const \cdot r \cdot T.
\end{equation}

Fix a 2-form $\sigma$ representing $c_1(TP)$. By (ii),
$\sigma=\lambda\omega+d\alpha$, with $\lambda\neq 0$, for some 1-form
$\alpha$. Then
$$
\left|\int_v\sigma\right|\leq \left|\lambda\int_v\omega\right|
+\left|\int_\gamma\alpha\right|,
$$
and, from \eqref{eq:area} and \eqref{eq:length2}, we see that
\begin{equation}
\labell{eq:sigma}
\left|\int_v\sigma\right| \leq \const_1\cdot r\cdot T+\const_2.
\end{equation}

By Lemma \ref{lemma:change}, we have $\Delta_v(\gamma,
K)=-\Delta_v(\gamma,H)$, where the negative sign is a consequence of
the fact that $H$ is a decreasing function of $K$. Thus,
\eqref{eq:index2} turns into
$$
1-\dim M \leq -\Delta_v(\gamma,K)\leq 1+\codim M,
$$
and, by Proposition \ref{prop:growth'},
$$
-\Delta_v(\gamma,K)\geq a\cdot T-c -2\int_v\sigma,
$$
where $a>0$. (The negative sign is a result of the convention change
from Section \ref{sec:growth} to Section~\ref{sec:proof-sa}.) Therefore,
$$
T-\frac{2}{a}\int_v\sigma\leq \frac{c+1+\codim M}{a}.
$$
Combining this upper bound with \eqref{eq:sigma}, we conclude that if
$r>0$ is sufficiently small, $T\leq T_0$ for some $T_0$ that depends
only on $K$.

Finally, as in the proof of the particular case, passing to the limit
as $\eps\to 0$, we see that the $T$-periodic orbits $\gamma$ of $K$
converge, by the Arzela-Ascoli theorem, to a periodic orbit of $K$ on
the level $K=r^2$ with period bounded from above by $T_0$. This
completes the proof of Theorem \ref{thm:main}.

\begin{Remark}
As is readily seen from the proofs of Theorem \ref{thm:main} and
Proposition \ref{prop:floer2}, one can also estimate the action and
the symplectic area of the orbit $\gamma$ of $K$ on the level
$K=r^2$. Namely, let $v$ be the capping of $\gamma$ as in the proof of
Theorem \ref{thm:main}.  Then,
$$
-\const_1 \cdot r^2\leq \int_v\omega \leq -\const_2 \cdot r^2
\quad\text{and}\quad
|A_K(\gamma,v)|\leq \const \cdot r^2
,
$$
where all constants are positive and depend only on $K$.  This follows
from the facts that $\const_1 \cdot r^2\leq C(r) <\const_2 \cdot r^2$
with, perhaps, some other values of the constants and that the period
of $\gamma$ is bounded from above.

\end{Remark}

\subsection{Concluding remarks}
\labell{sec:role}

\subsubsection{Dense existence of periodic orbits}

Proposition \ref{prop:floer2} can be reformulated (with some loss of
information) as a dense existence theorem for periodic orbits of $K$:

\begin{Proposition}
\labell{prop:floer3}

Assume that $P$ satisfies conditions (a) and (b) of Section
\ref{sec:Floer-gen}. Then, for a dense set of small $r>0$, the level
$K=r^2$ carries a contractible in $P$ periodic orbit $\gamma$ with
capping $v$ such that
$$
-1-\codim M\leq \Delta_v(\gamma)\leq \dim M-1
\quad\text{and}\quad 0<\left|\int_v\omega\right|<\const\cdot r^2,
$$
where $\const$ depends only on $K$.
\end{Proposition}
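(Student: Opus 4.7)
The plan is to read off Proposition \ref{prop:floer3} directly from Proposition \ref{prop:floer2}, bypassing the Sturm comparison estimate of Section \ref{sec:growth}. The contrast with Theorem \ref{thm:main} is clarifying: there, the Sturm bound on $\Delta$ furnished an a priori upper bound on the period of the Floer-theoretic orbit, allowing passage to the limit $\eps \to 0$ and placing the orbit on the exact level $K = r^2$. Here I am content to leave $\gamma$ wherever in the shell the Floer theory deposits it, and since the shell may be centered at any prescribed $\bar r$, this immediately yields orbits on a dense set of levels.

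Fix $r_0 > 0$ so small that Proposition \ref{prop:floer2} applies throughout $(0, r_0)$ and $2 C(r) < \lambda_0$ for all $r \leq r_0$. Let $I \subset (0, r_0)$ be an arbitrary non-empty open interval; I will produce $r \in I$ carrying an orbit with the stated properties. Pick $\bar r \in I$ and $\eps > 0$ small enough that $\{r : r^2 \in [\bar r^2 - \eps,\, \bar r^2 + \eps]\} \subset I$, and form $H = h \circ K$ as in Section \ref{sec:GG1} with that shell and $\max H = C(\bar r)$. Proposition \ref{prop:floer2} yields $\HF^{(a,b)}_{n_0}(H) \neq 0$ for some $(a,b) \subset (C(\bar r), 2C(\bar r))$. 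Following the template of Section \ref{sec:proof-sa}, take a compactly supported $C^1$-small non-degenerate perturbation $\tH$: non-vanishing of Floer homology produces a non-degenerate one-periodic orbit $\tgamma$ with capping $\tilde v$, Conley--Zehnder index $n_0$, and action in $(a,b)$. As $\tH \to H$, these orbits stay in the compact shell, so Arzela--Ascoli furnishes a one-periodic orbit $(\gamma, v)$ of $H$ with $A_H(\gamma, v) \in [a, b]$ and
$$
1 - \dim M \;\leq\; \Delta_v(\gamma, H) \;\leq\; 1 + \codim M
$$
by continuity of $\Delta$ together with \eqref{eq:CZ-SZ}. Because $2 C(\bar r) < \lambda_0$, the actions of constant orbits of $H$---which are of the form $k\lambda_0$ or $C(\bar r) + k\lambda_0$ with $k \in \Z$---all avoid the window $(a,b)$, so $\gamma$ must be non-constant and therefore lies on some level $K = r^2$ inside the shell, with $r \in I$.

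To finish, view $\gamma$ as a $T$-periodic orbit of $K$ and invoke Lemma \ref{lemma:change}. Since $H$ is a decreasing function of $K$, the sign convention of Section \ref{sec:conventions} gives $\Delta_v(\gamma) = -\Delta_v(\gamma, H)$, yielding $-1 - \codim M \leq \Delta_v(\gamma) \leq \dim M - 1$. For the area, autonomy of $H$ gives $A_H(\gamma, v) = -\int_v \omega + H(\gamma)$, and combining $A_H(\gamma, v) \in (C(\bar r), 2C(\bar r))$ with $H(\gamma) \in [0, C(\bar r)]$ produces $0 < -\int_v \omega < 2 C(\bar r) \leq \const \cdot r^2$, the last estimate using that $C(\bar r)$ scales quadratically in $\bar r$ and that $r^2 \sim \bar r^2$ on the shell. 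The main point of potential friction is excluding constant orbits from contributing to the Floer class in the relevant action window; this is handled cleanly by spherical rationality of $P$ together with the choice $2 C(\bar r) < \lambda_0$, after which the argument reduces to unpacking the definitions.
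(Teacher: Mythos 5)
Your proposal is correct and follows essentially the same route as the paper, which derives Proposition \ref{prop:floer3} from Proposition \ref{prop:floer2} by passing from one-periodic orbits of $H$ to orbits of $K$ exactly as in Sections \ref{sec:proof-sa} and \ref{sec:pf-general} (perturb, extract an index-$n_0$ orbit with action in $(a,b)$, exclude constant orbits via $2C(r)<\lambda_0$ and spherical rationality, flip the sign of $\Delta$ via Lemma \ref{lemma:change}, and read off the area bound from $A_H(\gamma,v)\in(C(r),2C(r))$ and $0\leq H\leq C(r)$). The only difference is that you spell out the density step (centering the shell at an arbitrary $\bar r$ in a prescribed interval), which the paper leaves implicit.
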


Referring the reader back to Section \ref{sec:other-results} for a
discussion of other dense or almost existence results, here we only
point out that almost existence of contractible periodic orbits of $K$
without upper and lower bounds on $\Delta_v(\gamma)$ is proved in
\cite{Lu2} under no topological assumptions on $P$. Proposition
\ref{prop:floer3} is sufficient for the proof of Theorem
\ref{thm:main} and can be easily established as a consequence of
Proposition \ref{prop:floer2} by passing from periodic orbits of $H$
to those of $K$ as in Sections \ref{sec:proof-sa} and
\ref{sec:pf-general}.

\subsubsection{The role of hypotheses (i) and (ii)}
As is mentioned in Section \ref{sec:intro}, hypotheses (i) and
(ii) in Theorem \ref{thm:main} can possibly be relaxed. Indeed, the
Sturm theoretic counterpart of the proof (Proposition
\ref{prop:growth'}) requires no topological assumptions on $P$ or
$M$. The Floer homological part of the argument (Proposition
\ref{prop:floer2}) holds under hypotheses (a) and (b), less
restrictive than (i) or (ii), and can probably be extended to, at least,
all spherically rational manifolds by using, for instance, the
machinery of central Floer homology from \cite{Ke:central}; see also
\cite{Al}. Furthermore, in the form of Proposition \ref{prop:floer3},
it can perhaps be generalized to arbitrary symplectic manifolds by
utilizing the holomorphic curve techniques as in, e.g., \cite{Lu2}.
However, it is not clear to the authors how to combine these two
counterparts to obtain an upper bound on the period without using
conditions (i) or (ii) or some other condition relating $[\omega]$ and
$c_1(TP)$.

\subsubsection{Action control and ``contact homology'' approach}
We conclude this paper by discussing two approaches to proving Theorem
\ref{thm:main}, which are more natural than the one used here but
encounter a serious difficulty.

The key to the first approach lies in establishing an upper bound on
the period of an orbit of $H$ via its action. Then, the theorem would
follow directly from a version of Proposition \ref{prop:floer2}. This
method has been used, for instance, to prove the Weinstein conjecture
for hypersurfaces (of contact type) as a consequence of a calculation
of Floer or symplectic homology; see, e.g., \cite{FHW,HZ} and
\cite{Gi:alan} for further references. Here the condition that the
level in question has contact type is crucial for controlling the
period of an orbit via its action. This can be seen, for instance,
from the counterexamples to the Hamiltonian Seifert conjecture,
\cite{Gi:bayarea,Gi:barcelona,GG:ex,GG,Ke:ex}. In the setting of
Theorem \ref{thm:main}, the energy levels $S=\{K=r^2\}$ do not in
general have contact type (with very few exceptions), and the authors
are not aware of any way to relate the period and the action in this
case by merely using the fact that $S$ is fiber-wise convex.

The idea of the second approach is to make use of a version of the
contact homology $\HC_*(S)$ defined for the level $S$ and detecting
closed characteristics on $S$.  (Strictly speaking, no construction of
$\HC_*$ applicable to the levels in question is available at the
moment.)  Then, one would consider the continuation map
$\HC_*(\Sigma^+)\to \HC_*(S) \to \HC_*(\Sigma^-)$, where $\Sigma^+$ is
a level of $\rho$ enclosing $S$ and $\Sigma^-$ is a level of $\rho$
enclosed by $S$. As in the proof of Proposition \ref{prop:floer2}, one
can expect this map to be non-zero, which would then yield
$\HC_*(S)\neq 0$. This argument relies on the assumption that the
groups $\HC_*$ are sufficiently invariant under deformations of the
level. However, to the best of the authors' understanding, to
guarantee such invariance, sufficient control of period via action is
necessary as is indicated again by the counterexamples to the
Hamiltonian Seifert conjecture.  Hence, this approach encounters
essentially the same problem as the first one.

\end{document}